\colorlet{cyan}[rgb]{cyan} 
\newcommand{\excise}[1]{}
\newtheorem{thm}{Theorem}[section]
\newtheorem{lemma}[thm]{Lemma}
\newtheorem{defn}[thm]{Definition}
\newtheorem{cor}[thm]{Corollary}
\newtheorem{prob}[thm]{Open Problem}
\theoremstyle{definition}
\newtheorem{rem}[thm]{Remark}
\newcommand{\Symm}{Sym}
\newcommand{\Sym}{Sym_5}
\newcommand{\Ss}{\mathcal{S}}
\newcommand{\CPP}{\mathcal{CP}}
\newcommand{\CP}{\mathcal{CP}_5}
\newcommand{\DNN}{\mathcal{DNN}}
\newcommand{\DN}{\mathcal{DNN}_5}
\newcommand{\NN}{\mathcal{N}}
\newcommand{\COPP}{\mathcal{COP}}
\newcommand{\COP}{\mathcal{COP}_5}
\newcommand{\CRF}{\mathcal{CPR}_5(5)}
\newcommand{\CR}{\mathcal{CPR}_n(k)}
\newcommand{\RF}{\mathbb{R}^5_{\ge 0}}
\newcommand{\cone}{\text{cone}}
\begin{document}
\title{The cone of $5\times 5$ completely positive matrices}
\author {Max Pfeffer}
\address{Technische Universit\"at Chemnitz}
\email{max.pfeffer@math.tu-chemnitz.de}
\address{Departamento de Matem\'aticas \\ Pontificia Universidad Cat\'olica de Chile}
\email{jsamper@mat.uc.cl}\author{Jos\'e Alejandro Samper}

\date{\today}

\maketitle

\begin{abstract}
We study the cone of completely positive (cp) matrices for the first interesting case $n = 5$. This is a semialgebraic set for which the polynomial equalities and inequlities that define its boundary can be derived. We characterize the different loci of this boundary and we examine the two open sets with cp-rank 5 or 6. A numerical algorithm is presented that is fast and able to compute the cp-factorization even for matrices in the boundary. With our results, many new example cases can be produced and several insightful numerical experiments are performed that illustrate the difficulty of the cp-factorization problem.
\end{abstract}

\section{Introduction}
Conic optimization is the problem of minimizing the value of a linear function over the intersection of a cone and a linear space. Many problems in optimization and geometry can be framed in this form and a wide variety of minimization methods have been developed for different types of cones. 
Cones that are also semialgebraic sets are of particular interest, because their boundary can be described by polynomial inequalities. Special relevant cases include polyhedral cones, the cone of positive semidefinite matrices, the cone of homogeneous nonnegative polynomials in any number of variables, or the cone of homogeneous polynomials that are sums of squares. 

There are many natural questions that may be asked about such semialgebraic cones. In order to asses the effectiveness of potential optimization algorithms, we need measures of the "complicatedness" of the cone. For instance, it is often hard to determine if a given vector is in a specific cone (the membership problem), and one may wonder if there are deep reasons for this. One potential approach consists of understanding the system of polynomial inequalities defining the cone. Then the number of monomials and their degree can serve as a measure for this hardness. Particularly interesting is the cone of (homogeneous) sums of squares, which is contained in the cone of nonnegative polynomials, but the algebraic description of their differences is difficult even in the smallest interesting cases, see \cite{Blekherman12, BlekhermanEtAl2012}.

In this article, we aim to carry out such an analysis for a more complicated cone that is also of interest in optimization: the collection of completely positive real symmetric $n\times n$ matrices $\CPP_n$, i.e., real nonnegative $n\times n$ matrices $A$ that can be written as $A=BB^{\mathsf T}$, where $B$ is also a nonnegative matrix. This convex cone is dual to the cone $\COPP_n$ of symmetric matrices whose associated quadratic form is nonnegative in the nonnegative orthant.  
The cones $\CPP_n$ and $\COPP_n$ are easily shown to be semialgebraic, but they are notably complicated to work with. For instance, determining whether a given matrix is in $\CPP_n$ is a co-NP-complete problem \cite{MurtyKabadi87}. There are several algorithms that attempt to factorize these matrices in order to test if they are in $\CPP_n$, but these algorithms (both exact and approximate) have a well established number of drawbacks: exact factorization algorithms are slow and/or fail to detect matrices near the boundary of the cone, while approximation algorithms are not reliable for matrices near the boundary of the cone. 

To try to understand the difficulty, one possibility is to study low dimensional cases in detail. For $n=1,2,3,4$, the cone $\CPP_n$ coincides with the cone $\DNN_n$ of matrices that are positive semidefinite and have nonnegative entries. Starting at $n=5$, the straightforward containment $\CPP_n\subseteq \DNN_n$ starts being strict. The boundary of $\DNN_n$ is generally easy to describe (matrices of low rank and/or with some zero entries), and it is hence not hard to describe which part of that boundary is also in $\CPP_n$. The interesting part is then to describe the boundary of $\CPP_n$ cointained in the interior of $\DNN_n$. Additionally, there is a partition of $\CPP_n$ according to what is called the completely postive rank (cp-rank), dealing with the smallest size of a nonnegative matrix factorization. The boundaries of the regions are also semialgebraic and in general quite difficult: it is not known how many parts are in this splitting.

In this work, we thoroughly investigate the smallest interesting case, namely $n=5$. We focus on the boundary $\partial \CP$ from an algebraic point of view, finding explicit equations for part of the boundary and implicit ones for the rest. We show that the Zariski-closure of the part of the boundary of $\CP$ in the interior of $\DN$ is a degree 3900 hypersurface with 24 irreducible components, 12 of which have degree 320 and 12 degree 5. The explicit polynomials for the degree 320 parts are probably impossible to compute, but the parametrization comes from a simple toric variety, which yields the correct scenario for the novel implicitization techniques form numerical algebraic geometry \cite{ChenKileel}. 

Additionally, the algebraic description of the boundary allows us to show an interesting fact and produce a number of computational experiments: we obtain that the set of matrices with rational entries is dense in the boundary $\partial \CP$. It also yields a recipe to construct many exact examples in all the components of the boundary as well as computing the tangent space at any given point. This data allows us to find matrices in $\DN\setminus \CP$ together with their closest point in $\partial\CP$. We include a discussion of the cp-rank of the matrices in the interior of $\CP$. The possible cp-ranks are known to be 5 or 6 and the boundary is again an algebraic surface. Although not much is known about this boundary, we highlight some of its properties in order to pursue some computational experiments. 

Next we present a novel numerical method for the approximation of the cp-factorization. This method is very fast and it is able to approximate factorizations even of matrices in the boundary of $\CP$. We carry out a number of experiments to estimate its performance in the small-dimensional setting $n = 5$. For instance, in the part of the boundary that does not coincide with the boundary of $\DNN_5$, the factorizations of the matrices are forced to have some zeros. The algorithm easily detects these zero entries in the factorization and generally finds the correct factorization. Using this, and with additional knowledge about the boundary separating matrices of cp-rank 5 and 6, the results of the experiments allow us to formulate a couple of relevant questions and conjectures. 

In short, the strength of this paper is that it combines the theoretical progress (the derivation of the algebraic equations for the boundary) with an experimental investigation of some interesting cases. Since the completely positive cone is of great importance in optimization, it is helpful to obtain this practical insight.

\subsection{Notation}
We will consider several convex cones contained in the space of real symmetric $n\times n$ matrices. The following cones will be relevant: 
\begin{itemize}
\item $\Symm_n$ denotes the space of $n\times n$ real symmetric matrices. 
\item $\Ss_n$ is the cone of all positive semidefinite matrices, i.e., matrices $M$ that can be written as $M=XX^{\mathsf T}$ for some $n\times k$ matrix $X$. 
\item $\NN_n$ is the cone of all symmetric matrices with nonnegative entries.
\item $\DNN_n= \Ss_n\cap \NN_n$ denotes the cone of all positive semidefinite matrices with nonnegative entries. This is sometimes called the {\em doubly nonnegative cone}. 
\item $\COPP_n$ is the {\em copositive cone} of all matrices $M$ such that $v^{\mathsf T}Mv \ge 0$ for all $v \in \mathbb{R}_{\ge 0}^n$. 
\item $\CPP_n$ is the cone of all matrices $A$ such that there is a $n\times k$ matrix $B$ with nonnegative entries and $A=BB^{\mathsf T}$. This cone is called the \emph{completely positive cone}.
\end{itemize}
The cones are semialgebraic sets, meaning that they can be described by polynomial inequalities. We are interested in understanding the difference between the cones $\CPP_n\subseteq \DNN_n$ as good as we can. Maxfield and Minc \cite{MaxfieldMinc1963} showed that the two cones are equal if and only if $n\le 4$. Thus we will focus in understanding the case $n=5$, i.e., the smallest value of $n$ in which the two cones are different. A particularly interesting question is to understand the subset $\partial\CP \cap \DN^\circ$, i.e., the elements in the boundary of $\CP$ that lie in the interior of $\DN$. 

Endow the space of symmetric matrices with the inner product $\langle A,B \rangle = \text{trace}(A^{\mathsf T}B)$ and the corresponding Frobenius norm $\| A \| = \sqrt{\langle A,A \rangle}$. Then $\CPP$ and $\COPP$ are dual cones in this setting. This will be exploited when we study the boundary of the cones. 

\section{The boundary of $\CP$}

We will use the extreme rays in $\COP$ to parametrize the factorizations of elements in $\partial\CP \cap \DN^\circ$. After that we manipulate these parametrizations to obtain information about the algebraic boundary of $\CP$, i.e., about the Zariski closure of $\partial\CP \cap \DN^\circ$ and its irreducible components. In other words, we reduce the problem to a computation of images of certain varieties under algebraic maps. This allows us to compute polynomials defining some of the irreducible components of the algebraic boundary and to compute the degree of the other components. We further discuss the uniqueness of completely positive factorizations in the boundary, rational factorizations of matrices $\CP$ and the cp-rank partition of $\CP$.

\subsection{The boundary of $\COP$}
Hildebrand classifies all extreme rays of $\COP$. The theorem goes as follows: 

\begin{thm}[\cite{Hildebrand}]\label{thm:char} Every extreme ray of $\COP$ is generated by a symmetric matrix $M$ of one the following four types: 
\begin{enumerate}
\item $M = vv^{\mathsf T}$, where $v\in \mathbb{R}^5$ has positive and negative entries. 
\item $M = e_ie_j^{\mathsf T} + e_je_i^{\mathsf T}$, where $\{e_1, \dots, e_5\}$ is the standard basis of $\mathbb{R}^5$.
\item $M = PDHDP^{\mathsf T}$, where $H$ is the Horn matrix below, $D$ is a positive diagonal matrix and $P$ is a permutation matrix. 

\begin{equation*} 
H = \left(
\begin{matrix*}[r]
   1 & -1 &1& 1& -1 \\
    -1 & 1& -1 &1& 1\\
    1 & -1 & 1 & -1 & 1\\
    1   & 1& -1&1& -1\\
    -1&1&1&-1&1
\end{matrix*} \right)
\end{equation*}
\item $M = PDT(\Theta)DP^{\mathsf T}$, where $T(\Theta)$ is a matrix defined in terms of five parameters below, $D$ is a positive diagonal matrix and $P$ is a permutation matrix. Here
\[ T(\Theta) = \left(
\begin{matrix*}
   1 &-\cos(\theta_1) &\cos(\theta_1+\theta_2)& \cos(\theta_4+\theta_5)&-cos(\theta_5)  \\
    -\cos(\theta_1)  & 1&-\cos(\theta_2)  &\cos(\theta_2+\theta_3)& \cos(\theta_5+\theta_1)\\
    \cos(\theta_1+\theta_2) & -\cos(\theta_2)  & 1 & -\cos(\theta_3)  & \cos(\theta_3+\theta_4)\\
    \cos(\theta_4+\theta_5)   & \cos(\theta_2+\theta_3)& -\cos(\theta_3) &1& -\cos(\theta_4) \\
    -\cos(\theta_5) &\cos(\theta_5+\theta_1)&\cos(\theta_3+\theta_4)&-\cos(\theta_4) &1
\end{matrix*} \right),
\] with $\Theta = (\theta_1, \theta_2, \theta_3, \theta_4,\theta_5)$ a tuple of positive real numbers satisfying, $\sum_{i=1}^5 \theta_i < \pi$. 

\end{enumerate}
\end{thm}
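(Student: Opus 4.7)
The approach is to analyze, for each extreme generator $M$ of $\COP$, the zero set $V_M = \{v \in \mathbb{R}^5_{\geq 0} : v^{\mathsf T} M v = 0\}$ together with the sign pattern of $M$. Each $v \in V_M$ is a global minimizer of the quadratic form $x^{\mathsf T} M x$ on the nonnegative orthant, so complementarity forces $M v \geq 0$ with $(M v)_i = 0$ for $i \in \operatorname{supp}(v)$. The minimal-support zeros of $M$ then impose linear constraints on the entries of $M$, and extremality is equivalent to the system of such constraints coming from minimal zeros being rich enough to determine $M$ up to a positive scalar.

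\textbf{The PSD and nonnegative cases.} If $M \in \Ss_5$, then extremality in the larger cone $\COP$ forces extremality inside the subcone $\Ss_5$, whence $M$ is rank one, $M = v v^{\mathsf T}$. A vector $v$ of uniform sign would make $M \in \NN_5$ and allow the decomposition $M = \sum_{i<j} v_i v_j (e_i e_j^{\mathsf T} + e_j e_i^{\mathsf T}) + \sum_i v_i^2 e_i e_i^{\mathsf T}$ inside $\NN_5 \subset \COP$, contradicting extremality; hence $v$ must have mixed signs, yielding type (1). If instead $M \in \NN_5$ without being rank one as above, extremality inside $\COP$ implies extremality inside the subcone $\NN_5$, whose extreme rays are precisely $e_i e_j^{\mathsf T} + e_j e_i^{\mathsf T}$ (with $i = j$ allowed, covering the diagonal case), yielding type (2).

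\textbf{The exceptional case.} Suppose $M$ is neither PSD nor nonnegative; then $M$ has at least one negative off-diagonal entry but fails to be PSD. Let $G_-(M)$ be the graph on $\{1, \ldots, 5\}$ whose edges record negative off-diagonal entries of $M$. The structural core of the proof is to show that $G_-(M)$ is a Hamiltonian $5$-cycle. Disconnectedness of $G_-(M)$ would yield a block decomposition contradicting extremality; a vertex of degree $1$ or of degree $\geq 3$ permits the construction of a minimal zero whose support exhibits a nontrivial splitting $M = \lambda N + (M - \lambda N)$ with both summands in $\COP$; and if $G_-(M)$ fails to span all five indices, Diananda's classical result $\COPP_n = \Ss_n + \NN_n$ for $n \leq 4$ forces $M$ to reduce to type (1) or (2). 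After normalizing $G_-$ to the cycle $(1,2,3,4,5,1)$ by conjugation with a permutation $P$, and rescaling the diagonal to $1$ by conjugation with a positive diagonal $D$, copositivity translates into a trigonometric parametrization of the off-diagonal entries via five angles $\theta_1, \ldots, \theta_5 > 0$ with $\sum_i \theta_i < \pi$, yielding $T(\Theta)$ and thus type (4). The Horn matrix appears as a discrete extreme ray verified directly by computing its minimal zeros and checking that its $\pm 1$ sign pattern is not realized in the continuous orbit of $T(\Theta)$, giving type (3).

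\textbf{The main obstacle.} The heaviest work is the classification inside the exceptional case: eliminating every non-Hamiltonian $5$-cycle sign pattern requires an intricate case analysis that plays copositivity off against extremality via the minimal zero structure, and then establishing the trigonometric parametrization of the $5$-cycle case depends on a non-obvious change of variables identifying the positive chord entries of the cycle with cosines of sums of adjacent angles $\theta_i + \theta_{i+1}$. Separating the Horn matrix from the $T(\Theta)$ family, rather than viewing it as a degenerate limit, further requires a delicate algebraic verification that Horn's integer entries are unattainable by any admissible angle tuple.
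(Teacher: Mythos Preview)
The paper does not prove this theorem at all: it is quoted verbatim as Hildebrand's classification, with the citation \cite{Hildebrand} attached to the theorem header, and is then used as a black box throughout Section~2. There is therefore no ``paper's own proof'' to compare your proposal against.

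That said, your outline is broadly faithful to the strategy of Hildebrand's original argument (analysis of minimal zeros, reduction of the sign graph $G_-(M)$ to a Hamiltonian $5$-cycle after ruling out the PSD and nonnegative subcases, followed by a trigonometric parametrization). A couple of points where the sketch is loose: the inference ``$M\in\Ss_5$ and extreme in $\COP$ implies extreme in $\Ss_5$'' is correct, but your subsequent argument that $v$ must have mixed signs overlooks the single-support case $v=e_i$, which is genuinely extreme yet has no negative entry; this is exactly why the diagonal matrices $e_ie_i^{\mathsf T}$ must be absorbed into type~(2) with $i=j$ rather than excluded by the mixed-sign clause of type~(1). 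Also, your description of the Horn matrix as ``unattainable by any admissible angle tuple'' is slightly misleading: $H$ is the boundary limit $\sum\theta_i\to\pi$ of the $T(\Theta)$ family, and is listed separately precisely because that limit lies outside the open parameter domain, not because of an algebraic obstruction on the entries. Finally, you correctly flag that the elimination of non-cycle sign patterns is the real work; your one-line gestures at ``disconnectedness yields a block decomposition'' and ``degree $\neq 2$ permits a splitting'' are where a referee would demand the actual case analysis, and that analysis occupies the bulk of Hildebrand's paper.
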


This parametrization of the extreme rays in $\COP$ suggests an approach to understand the algebraic boundary of $\CP$. In fact, $\partial\CP$ is the set of matrices $A$ such that $\langle A, X\rangle \ge 0$ for all the extreme rays described, with the additional constraint that equality holds for at least one ray. The part of the boundary shared by $\CP$ and $\DN$ is well understood: It consists of low rank matrices and matrices with some zero entries and corresponds to the types (1) and (2) in the theorem above. We will focus mainly on the other part of the boundary, namely, the matrices in the interior of $\DN$ and the boundary of $\CP$.
Therefore, any such $A$ must be invertible, since it would otherwise be on the boundary of $\Ss_5$ and thus also on the boundary of $\DN$. Furthermore, it is necessary that the entries of $A$ are all strictly positive to avoid the boundary of $\NN_5$. All the matrices in this part of the boundary are orthogonal to matrices of the type (3), which we call the {\em Horn orbit}, or (4), which we call the {\em Hildebrand orbit}. We will first work out the orthogonality to the parts that ignore the permutation matrices. With that in mind, the sets of completely positive matrices in $\partial \CP \cap \DNN_5^\circ$ orthogonal to matrices in the Horn or Hildebrand orbit will be called the Horn and Hildebrand {\em locus} respectively. 

We rely on the following simple remark, exploited heavily in \cite{MaxfieldMinc1963} to bound the completely positive rank in $\CP$. Assume that $A\in \partial \CP$ is orthogonal to a matrix $M$ as in parts (3) or (4) in Theorem \ref{thm:char}. Let $B$ be a nonnegative factorization of $A$, i.e., $A= BB^{\mathsf T}$. If $v_1, \dots v_k$ are the columns of $B$, then $v_i^{\mathsf T}Mv_i = 0$, i.e., each column of $B$ is a zero of the quadratic form associated to $M$. Since $M$ is copositve, this is equivalent to saying that every column is a global minimum of the quadratic form in the positive orthant $\mathbb{R}_{\ge 0}^5$.

We begin by mentioning a relevant theorem. 
\begin{thm}[\cite{Shaked-Monderer2013} Section 4]\label{thm:columnNumber} Assume $A\in \partial\CP$ is orthogonal to a matrix in the Horn orbit or a Hildebrand orbit. Then $A= BB^{\mathsf T}$ for a nonnegative square matrix $B \in \mathbb R_\geq^{5 \times 5}$.
\end{thm}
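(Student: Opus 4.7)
The plan is to use the extremality of $M$ (Horn or Hildebrand type) to force the columns of any cp-factor of $A$ to lie in a small zero set, and then to reduce to a square $5\times 5$ factor by a change of basis followed by a per-edge rank argument.

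First, the discussion preceding the theorem places $A \in \DN^{\circ}$, so $A$ is positive definite of rank $5$. For any nonneg factorization $A = BB^{\mathsf T}$ with columns $v_1, \dots, v_k$ of $B$, copositivity of $M$ and
\[
0 = \langle A, M \rangle = \sum_{i=1}^k v_i^{\mathsf T} M v_i
\]
force every $v_i$ into the zero set $Z(M) := \{v \in \RF : v^{\mathsf T} M v = 0\}$. For $M = H$, the identity $v^{\mathsf T} H v = (\sum_i v_i)^2 - 4 \sum_i v_i v_{i+1}$ (indices cyclic in $\mathbb{Z}/5$), a coordinate-face analysis, and the nonsingularity of $H$ (which rules out strictly positive zeros) give
\[
Z(H) = \bigcup_{j \in \mathbb{Z}/5} \cone(r_j, r_{j+1}), \qquad r_j := e_j + e_{j+1},
\]
a union of five two-dimensional cones spanned by consecutive pairs of minimal zeros. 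For $M = T(\Theta)$, Hildebrand's parametrization yields an analogous decomposition into five two-dimensional cones generated by $\Theta$-dependent minimal zeros $r_j^{\Theta}$ whose supports form a $5$-cycle.

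Each column of $B$ may thus be written $v = W \ell_v$ with $W := [\,r_1 \mid \cdots \mid r_5\,]$ nonsingular (for Horn, $\det W = 2$; for Hildebrand, nonsingularity follows from $\sum_i \theta_i < \pi$) and $\ell_v \geq 0$ supported on two cyclically consecutive indices. Setting $C := [\,\ell_{v_1} \mid \cdots \mid \ell_{v_k}\,]$, one obtains $A = W G W^{\mathsf T}$ with $G := C C^{\mathsf T}$ a $5 \times 5$ psd nonneg \emph{cyclic tridiagonal} matrix of rank $5$. It remains to factorize $G$ using exactly five $2$-cyclic-consecutive-support columns. Grouping the columns of $C$ by their supporting cyclic edge $\{j, j+1\}$ packages the contributions into a $2 \times 2$ nonneg psd block $P_j$ with $(P_j)_{12} = G_{j, j+1}$ and $G_{jj} = (P_{j-1})_{22} + (P_j)_{11}$. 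Since $2 \times 2$ doubly nonneg matrices have cp-rank equal to their rank, the tridiagonal-cp-rank of $G$ is $\sum_j \mathrm{rank}(P_j)$, and hitting the target of five columns requires each $P_j$ to have rank $1$, i.e.\ $(P_j)_{11}(P_j)_{22} = G_{j, j+1}^2$. Writing $x_j := (P_j)_{11}$, this becomes the cyclic recursion $x_j = G_{jj} - G_{j-1, j}^2 / x_{j-1}$ ($j \in \mathbb{Z}/5$); a positive root of its closure $x_5 = x_0$ produces $C \in \mathbb{R}_{\geq 0}^{5 \times 5}$, and $B := W C$ is the desired $5 \times 5$ nonneg square matrix with $B B^{\mathsf T} = A$.

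The crux of the argument is solving the cyclic recursion with all $x_j > 0$: one must exploit the positive definiteness of $A$ (hence of $G = W^{-1} A W^{-\mathsf T}$) to show the closing polynomial admits a feasible positive root, since otherwise the minimum number of columns would exceed $5$. A secondary difficulty is the case $M = T(\Theta)$, where the $\Theta$-dependence blocks the clean closed-form identity available for $H$, so one must work directly from Hildebrand's parametrization to identify the five minimal zeros and verify the nonsingularity of the corresponding matrix $W^{\Theta}$.
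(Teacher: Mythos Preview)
The paper does not give its own proof here; the theorem is quoted from \cite{Shaked-Monderer2013}. So I evaluate your argument on its own terms.

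Your treatment of the Hildebrand case rests on a false premise. You claim that $Z\bigl(T(\Theta)\bigr)$ decomposes into five two-dimensional cones spanned by consecutive minimal zeros, in analogy with the Horn matrix. It does not: as recorded in Theorem~\ref{thm:HildFact} of this paper (and in Hildebrand's own analysis), the nonnegative zeros of the Hildebrand form are exactly the positive multiples of the five columns of $S(\Theta)$ --- five \emph{rays}, with no two-dimensional pieces. This is consistent with Baumert's lemma quoted later in the paper: a full-rank extreme copositive ray has precisely five zero directions. The good news is that the correct description makes the Hildebrand case immediate: every column of any nonnegative factor of $A$ lies on one of those five rays, columns on the same ray collapse via $\alpha^2 rr^{\mathsf T}+\beta^2 rr^{\mathsf T}=(\alpha^2+\beta^2)\,rr^{\mathsf T}$, and since $A$ has rank $5$ all five rays must appear. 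No change of basis and no recursion are needed there.

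For the Horn case your reduction is sound: writing each column in the basis of minimal zeros $r_j$ produces a positive-definite, nonnegative, cyclically tridiagonal $G=W^{-1}AW^{-\mathsf T}$ that is completely positive with edge-supported columns, and a $5\times5$ factorization of $A$ exists iff $G$ admits a decomposition into five rank-one edge blocks, which unwinds to your recursion $x_j=G_{jj}-G_{j-1,j}^2/x_{j-1}$ closing up cyclically with all $x_j\in(0,G_{jj})$. But you stop precisely at the step that carries all the content. ``One must exploit positive definiteness'' is a description of the difficulty, not a resolution: the fivefold composition is a M\"obius transformation, and one has to show it has a fixed point in the feasible interval with every intermediate iterate remaining positive. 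Without that argument (or an equivalent one bounding the cp-rank of a rank-$5$ completely positive matrix with $C_5$ support), the Horn half is a sketch rather than a proof.
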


\subsection{The dual of the orbit of the Horn matrix}
The following theorem is hidden in a proof in \cite{Shaked-Monderer2013}:
\begin{thm}[\cite{Shaked-Monderer2013} Theorem 4.1] \label{thm:uniqueness} Let $H$ be the Horn matrix. A vector $v\in \RF$ is a solution to the equation $v^{\mathsf T}Hv=0$ if and only if it is in the union of cones 
\[\bigcup_{i=1}^5 \cone(e_i+e_{i+1}, e_{i+1}+e_{i+2}),\]
where the indices are taken modulo 5. Consequently, every matrix in $\CP\cap \DN^\circ$ is an invertible matrix $A$ such that the columns of any nonnegative matrix $B$ for which $A=BB^{\mathsf T}$ are (linearly independent) elements of the cone. \end{thm}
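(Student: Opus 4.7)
The plan is to prove each direction of the biconditional separately. The forward direction ($\Leftarrow$) is a direct rank calculation: the principal submatrix $H[\{i,i+1,i+2\}]$ equals
\[
\begin{pmatrix} 1 & -1 & 1 \\ -1 & 1 & -1 \\ 1 & -1 & 1 \end{pmatrix},
\]
which has rank one with every row proportional to $(1,-1,1)$. Since $(\alpha,\alpha+\beta,\beta)$ is annihilated by $(1,-1,1)$, any $v=\alpha(e_i+e_{i+1})+\beta(e_{i+1}+e_{i+2})$ with $\alpha,\beta\ge 0$ satisfies $v^{\mathsf T}Hv=0$.

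For the reverse direction ($\Rightarrow$), the key observation is that since $H\in\COP$ by Theorem~\ref{thm:char}, any $v\in\RF$ with $v^{\mathsf T}Hv=0$ is a global minimizer of $x\mapsto x^{\mathsf T}Hx$ on $\RF$; the KKT complementary-slackness conditions then force $(Hv)_i=0$ for every $i$ in the support $S:=\{i:v_i>0\}$. Equivalently, $v|_S$ is a strictly positive vector in the kernel of the principal submatrix $H[S]$, so the problem reduces to classifying which principal submatrices of $H$ admit such a positive kernel vector, up to cyclic rotation of $\{1,\dots,5\}$.

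I would then enumerate $S$ by size. The sizes $|S|\in\{1,5\}$ are immediate: $H_{ii}=1$ handles $|S|=1$, and $\det H=16\ne 0$ (verifiable directly, or from the circulant eigenvalues $1$, $1\pm\sqrt{5}$) handles $|S|=5$. For $|S|=2$ the two orbits are $\{1,2\}$ and $\{1,3\}$: the adjacent orbit is singular with kernel direction $e_i+e_{i+1}$, while the non-adjacent orbit has $H_{13}=1$ and kernel direction $(1,-1)$, which fails positivity. For $|S|=3$ the two orbits are the consecutive $\{1,2,3\}$ (already handled, with kernel spanned by $(1,1,0)$ and $(0,1,1)$) and the gapped $\{1,2,4\}$, whose principal submatrix is invertible. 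The most delicate subcase is $|S|=4$ with the unique orbit $\{1,2,3,4\}$: adding rows $1$ and $3$ of $H[\{1,2,3,4\}]$ gives the constraint $v_1-v_2+v_3=0$, and substituting this into row $1$ forces $v_4=0$, contradicting $4\in S$. In each surviving case the positive kernel lies inside some $\cone(e_i+e_{i+1},e_{i+1}+e_{i+2})$, and the theorem follows.

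The main obstacle will be the $|S|=4$ subcase, which requires spotting the right row combination to squeeze out a contradiction from positivity; the remaining subcases either reduce to inspecting small determinants or are ruled out by trivial sign constraints. The proof is clean overall because the KKT reduction transforms the zero-set question into a kernel problem for a small finite family of principal submatrices, and the alternating $\pm 1$ pattern of the Horn matrix along the $5$-cycle is what makes only the cyclically consecutive supports $\{i,i+1\}$ and $\{i,i+1,i+2\}$ survive.
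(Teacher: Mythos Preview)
The paper does not supply its own proof of this theorem; it is quoted as a result ``hidden in a proof'' in \cite{Shaked-Monderer2013}. Your argument is therefore a self-contained proof where the paper offers none, and it is correct.

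The KKT/complementary-slackness reduction to positive kernel vectors of principal submatrices $H[S]$ is the standard route for such statements in the copositive literature, your orbit enumeration under the cyclic (indeed dihedral) symmetry of $H$ is complete, and each subcase check---the $2\times 2$ kernels, the invertibility of $H[\{1,2,4\}]$, the row-sum trick for $|S|=4$, and $\det H=16$ for $|S|=5$---is valid. The ``Consequently'' clause follows immediately from the observation the paper makes just before the theorem: each column $v_j$ of a nonnegative factor $B$ of $A$ satisfies $v_j^{\mathsf T}Hv_j=0$, and $A\in\DN^\circ$ forces $A$ (and hence any square factor $B$) to be invertible.

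The only step you leave implicit is in the consecutive $|S|=3$ case: having identified the kernel of $H[\{i,i+1,i+2\}]$ as $\{(a,a+b,b)\}$, you should note that requiring all three coordinates strictly positive forces $a,b>0$, which is exactly what places $v$ in $\cone(e_i+e_{i+1},e_{i+1}+e_{i+2})$. This is trivial to fill in.
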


Notice that this restricts significantly the possible factor matrices $B$: the columns are a choice of five vectors in the union of such cones. Notice furthermore that if two of the columns are in the same cone, then the product $BB^{\mathsf T}$ has at least one entry equal to zero and is consequently in $\partial\DN$. It follows that in the Horn part of the boundary, the nonnegative factorizations of the matrices must contain exactly one vector in each cone. Notice that if $A = BB^{\mathsf T}$ and $\tilde B$ is obtained by permuting the columns of $B$, then $A = \tilde B \tilde B^{\mathsf T}$. Hence, the structure of any factorization is as follows:

\begin{lemma}\label{lem:factHorn} Any matrix $A$ orthogonal to $H$ and in $\partial\CP\cap \DN^\circ$ has a cp-factorization $A=BB^{\mathsf T}$, where $B$ is of the form: 
\begin{equation}\label{eqn:HornParam}B = \left(
\begin{matrix*}
   1& 0&0& y_4& y_5+1 \\
    y_1+1 & 1& 0 &0& y_5\\
    y_1 & y_2+1 & 1 & 0& 0\\
    0   & y_2& y_3+1&1& 0\\
    0&0&y_3&y_4+1&1
\end{matrix*} \right) \left(\begin{matrix*}
   z_1& 0&0& 0& 0\\
    0 & z_2& 0 &0& 0\\
    0& 0 & z_3& 0& 0\\
    0   & 0& 0&z_4& 0\\
    0&0&0&0&z_5
\end{matrix*} \right).
 \end{equation}
Here $y_1, y_2,y_3,y_4,y_5,z_1,z_2,z_3,z_4,z_5$ are positive real numbers. 
\end{lemma}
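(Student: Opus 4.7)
The plan is to combine Theorems \ref{thm:columnNumber} and \ref{thm:uniqueness}, pinning down the shape of each column of a nonnegative factorization through a short support analysis, and then using strict positivity of the entries of $A$ as a single pigeonhole-type argument to both assign columns to cones and force all parameters to be strictly positive.

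First, by Theorem \ref{thm:columnNumber} I may take $B \in \mathbb R_{\ge 0}^{5\times 5}$ with $A = BB^{\mathsf T}$, and by Theorem \ref{thm:uniqueness} each column of $B$ lies in one of the cones $C_j := \cone(e_j+e_{j+1}, e_{j+1}+e_{j+2})$ (indices mod $5$). Any vector in the strict interior of $C_j$ may be written uniquely as $\alpha(e_j+e_{j+1}) + \beta(e_{j+1}+e_{j+2})$ with $\alpha, \beta > 0$; its $j$-th, $(j+1)$-st and $(j+2)$-nd coordinates are $\alpha$, $\alpha+\beta$ and $\beta$, respectively, and all other coordinates vanish.

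Next I would study the distance-$2$ entries $A_{i,i+2} = \sum_k B_{i,k}B_{i+2,k}$. A column lying on a boundary ray of some cone (a nonnegative multiple of a single $e_j+e_{j+1}$) has support on two consecutive indices, so it cannot contribute to any distance-$2$ entry. A column in the strict interior of $C_j$ contributes $\alpha\beta > 0$ to $A_{j,j+2}$ and $0$ to every other $A_{i,i+2}$. Since $A \in \DN^\circ$, every entry of $A$ is strictly positive; in particular each $A_{i,i+2}$ is positive, so each $C_i$ must contain at least one strictly interior column. With only five columns available, the pigeonhole principle then forces exactly one interior column per cone and no columns on boundary rays.

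Permutations of the columns preserve $BB^{\mathsf T}$, so after relabeling I may assume column $j$ of $B$ lies in the interior of $C_j$, say column $j = \alpha_j(e_j+e_{j+1}) + \beta_j(e_{j+1}+e_{j+2})$ with $\alpha_j, \beta_j > 0$. Setting $z_j := \alpha_j > 0$ and $y_j := \beta_j/\alpha_j > 0$ gives column $j$ equal to $z_j\bigl(e_j + (1+y_j)e_{j+1} + y_j e_{j+2}\bigr)$, matching the parametrization in \eqref{eqn:HornParam}. The key subtlety I expect to require the most care is the bundled pigeonhole/strict-positivity step: the five distance-$2$ entries individually test for interior occupancy of a single cone each, so that their collective positivity delivers the column-to-cone bijection and the strict positivity of every $y_i$ and $z_i$ simultaneously, rather than in two separate arguments.
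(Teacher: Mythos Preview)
Your proof is correct and follows essentially the same approach as the paper: invoke Theorems~\ref{thm:columnNumber} and~\ref{thm:uniqueness}, argue by pigeonhole that each cone $C_i$ receives exactly one column, permute columns, and read off the parametrization. Your treatment is in fact more explicit than the paper's, which simply asserts that two columns in the same cone force a zero entry of $A$; your use of the distance-$2$ entries $A_{i,i+2}$ makes that step rigorous and simultaneously handles the strict-interior condition needed for $y_i>0$.
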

 The set of all matrices in the Lemma is a 10-dimensional relatively open cone in the space of $5\times 5$ matrices. The left action of the diagonal matrices on the factorization increase the dimension to create a hypersurface in a 15-dimensional space. The variety $V_{Horn}$ of the matrices orthogonal to $DHD$ for some diagonal matrix $D$ therefore is a hypersurface, which can be parametrized explicitly by modifying the lemma above. In order to achieve this we define some relevant varieties. 
\begin{defn} Let $W \subseteq \mathbb R_{\geq 0}^{5 \times 5}$ be the linear subspace of matrices of the form \begin{equation*}\label{eqn:bigspace}\left(
\begin{matrix*}
   y_{11}& 0&0& y_{41}& y_{51} \\
    y_{12} & y_{22}& 0 &0& y_{52}\\
    y_{13 }& y_{23}& y_{33} & 0& 0\\
    0   & y_{24}& y_{34}&y_{44}& 0\\
    0&0&y_{35}&y_{45}& y_{55}
\end{matrix*} \right).\end{equation*}
Let $Z_{Horn}\subset W$  be the hypersurface of all matrices of the form $DB$, where $B$ is a matrix as in \eqref{eqn:HornParam} and $D$ is a diagonal matrix with positive entries. 
\end{defn}
\begin{thm}\label{thm:implicit}
The Horn locus $V_{Horn}\subseteq \CP\cap \DN^\circ$ of matrices orthogonal to the matrices of the form $DHD$ is the image of $Z_{Horn}$ under the map $\varphi: W \to \Ss_n$ given by $\varphi(X)=XX^{\mathsf T}$. 
\end{thm}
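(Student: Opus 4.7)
The plan is to exploit the scaling identity $\langle A, DHD\rangle = \text{trace}(A\,DHD) = \text{trace}(DAD\,H) = \langle DAD, H\rangle$: this translates orthogonality to an arbitrary element of the Horn orbit into orthogonality to $H$ itself, after which Lemma \ref{lem:factHorn} becomes directly applicable. Since positive diagonal conjugation preserves $\CP$, $\DN^\circ$, and the property of lying on the boundary of $\CP$, the conjugation can be undone cleanly at the end. With this setup, the theorem splits into two symmetric inclusions.

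For $V_{Horn} \subseteq \varphi(Z_{Horn})$, I would start with $A \in V_{Horn}$ and pick a positive diagonal $D$ witnessing $\langle A, DHD\rangle = 0$. Setting $\tilde A := DAD$, the scaling identity gives $\langle \tilde A, H\rangle = 0$, and $\tilde A$ still lies in $\partial \CP \cap \DN^\circ$ by the invariance above. Lemma \ref{lem:factHorn} then produces a factorization $\tilde A = BB^{\mathsf T}$ with $B$ of the form \eqref{eqn:HornParam}. Defining $X := D^{-1} B$ yields $A = XX^{\mathsf T} = \varphi(X)$, and $X$ belongs to $Z_{Horn}$ essentially by definition, with $D^{-1}$ playing the role of the leading positive diagonal matrix.

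For the reverse inclusion $\varphi(Z_{Horn}) \subseteq V_{Horn}$, I would take a generic $X = D'B \in Z_{Horn}$, so $\varphi(X) = D'(BB^{\mathsf T})D'$. Because the columns of $B$ lie in the cones of Theorem \ref{thm:uniqueness}, each column is annihilated by the quadratic form of $H$, which after summing gives $\langle BB^{\mathsf T}, H\rangle = 0$. Reversing the scaling identity with $D := (D')^{-1}$ then yields $\langle \varphi(X), DHD\rangle = 0$, so $\varphi(X)$ is orthogonal to an element of the Horn orbit. What remains is to confirm $\varphi(X) \in \CP \cap \DN^\circ$: membership in $\CP$ is immediate from $X \geq 0$; invertibility follows because the columns of $B$ are one representative from each of the five cyclically arranged cones $\cone(e_i+e_{i+1}, e_{i+1}+e_{i+2})$ and are therefore linearly independent; and strict entrywise positivity of $BB^{\mathsf T}$ is a finite check on the cyclic sparsity pattern of \eqref{eqn:HornParam}.

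The one substantive step beyond scaling bookkeeping is this last entrywise positivity check, since without it the image might stray out of $\DN^\circ$ and spoil the set equality. I expect it to be routine but unavoidable: each of the fifteen upper-triangular entries of $BB^{\mathsf T}$ must be exhibited as a sum containing at least one strictly positive product, which is easy because any two of the five columns of \eqref{eqn:HornParam} share at least one row index where both are nonzero (a consequence of each support being three consecutive indices modulo $5$). Once this is verified, the theorem follows.
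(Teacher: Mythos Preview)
Your proposal is correct and is exactly the argument the paper has in mind. The paper states Theorem~\ref{thm:implicit} without a separate proof, treating it as an immediate consequence of Lemma~\ref{lem:factHorn} together with the remark just before the definition of $W$ that the left diagonal action inflates the $10$-parameter family of Lemma~\ref{lem:factHorn} to a $14$-dimensional hypersurface. Your two inclusions via the scaling identity $\langle A, DHD\rangle = \langle DAD, H\rangle$ are precisely how one unpacks that remark, and your identification of the entrywise-positivity and invertibility checks as the only nontrivial content of the reverse inclusion is accurate (one tiny slip: for $(BB^{\mathsf T})_{ij}>0$ you need any two \emph{rows} of $B$ to share a nonzero column, not any two columns to share a row---but the cyclic three-consecutive support pattern makes the pigeonhole argument identical either way).
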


\begin{thm}
The variety $Z_{Horn}$ is the vanishing locus of the polynomial \[s(x) = \det(H\circ X) \]
Here $\circ$ denotes the Hadamard product of matrices. 
\end{thm}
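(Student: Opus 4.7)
The plan is to prove the two inclusions $Z_{Horn} \subseteq \{X \in W : s(X) = 0\}$ and the reverse (the latter at the level of Zariski closures), using an explicit computation for each direction.

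\emph{Forward inclusion.} If $X = DB$ with $D$ diagonal and $B$ as in \eqref{eqn:HornParam}, then $H\circ (DB) = D(H\circ B)$, so $s(X) = \det D \cdot \det(H\circ B)$ and it suffices to show $\det(H\circ B) = 0$. A direct calculation shows that column $j$ of $H\circ B$ equals $z_j(f_j - y_j f_{j+1})$, where $f_k := e_k - e_{k+1}$ with cyclic indices. Since $\sum_{k=1}^5 f_k = 0$, these five vectors span only a 4-dimensional subspace of $\mathbb R^5$, so the five columns of $H\circ B$ are linearly dependent and $\det(H\circ B) = 0$. Equivalently, the row vector $(1,1,1,1,1)$ annihilates $H\circ B$ from the left, since all column sums of $H\circ B$ vanish.

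\emph{Reverse inclusion via explicit construction.} Given $X \in W$ with $\det(H\circ X) = 0$, choose a nonzero row vector $e = (e_1,\ldots,e_5)$ with $e(H\circ X) = 0$. Reading this condition column by column, using the sign pattern of $H$ and the sparsity of $X$, gives the five equations
\[
e_i y_{ii} - e_{i+1} y_{i,i+1} + e_{i+2} y_{i,i+2} = 0 \quad (i = 1,\ldots,5,\ \text{cyclic}).
\]
On the Zariski-generic locus where $e$ has no vanishing entry, set $d_i := 1/e_i$, $z_i := y_{ii}/d_i$, $y_i := y_{i,i+2}/(d_{i+2} z_i)$, and form $D = \mathrm{diag}(d_i)$ together with $B$ as in \eqref{eqn:HornParam}. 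A short verification shows that $(DB)_{i,i} = y_{ii}$ and $(DB)_{i+2,i} = y_{i,i+2}$ hold by construction, and that $(DB)_{i+1,i} = y_{i,i+1}$ holds precisely by the null-space equation above, so $X = DB$.

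The main obstacle I anticipate is reconciling this algebraic construction with the semialgebraic nature of $Z_{Horn}$: for $(D, y, z)$ to lie in the strictly positive regime required by its definition, the null vector $e$ itself must be coordinate-wise positive. On the dense open locus where $X$ is already a $DB$, one has $e = (1/d_1,\ldots,1/d_5) > 0$, so a continuity argument on the semialgebraic set $\{X \in W : X > 0,\ s(X) = 0\}$ should suffice. An alternative avenue avoids positivity entirely: $Z_{Horn}$ is irreducible of dimension $14$ (the parametrization $(D,y,z)\mapsto DB$ carries the 1-parameter scaling symmetry $(D,z)\mapsto(\lambda D, z/\lambda)$), and the zero set of $s$ is a hypersurface of the same dimension in the 15-dimensional space $W$, so $\overline{Z_{Horn}}$ equals that zero set provided $s$ is an irreducible polynomial. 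Verifying irreducibility of $s$ — multilinear of degree $5$ in $15$ variables with $13$ monomial terms — is feasible combinatorially but appears more intricate than the direct construction above.
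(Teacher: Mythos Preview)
Your argument is correct and genuinely different from the paper's in both directions.

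For the forward inclusion, the paper simply says ``this computation is done by a computer algebra system,'' whereas you give a clean hand proof: the columns of $H\circ B$ are $z_j(f_j - y_j f_{j+1})$ with $f_k = e_k - e_{k+1}$, so they lie in the $4$-dimensional subspace $\{v:\sum v_i=0\}$. This is more informative than a machine check.

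For the reverse inclusion, the paper gives two arguments you do not use. The first is your ``alternative avenue'': dimension count plus irreducibility of $s$. The paper justifies irreducibility with the phrase ``because determinants are,'' which is too quick---the generic $5\times5$ determinant is irreducible, but here we are restricting to the $15$-dimensional subspace $W$, and determinants of sparse matrices can factor (think block-diagonal). The actual reason irreducibility holds is that the bipartite graph of the nonzero pattern of $W$ is connected: a factorization $s=pq$ would force a partition of rows and columns with no cross edges, by multilinearity in each row and column. So your caution about this step is warranted, though the verification is short once one sees the right argument. The paper's second proof is conceptual: the Hadamard involution $M\mapsto H\circ M$ sends the cone generated by the $DHD$ to the PSD cone (generated by $D\mathbf{1}D$), whose dual boundary is $\det=0$; pulling back gives $\det(H\circ X)=0$. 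This explains \emph{why} the determinant appears and is worth knowing.

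Your own route---explicitly inverting the parametrization on the locus where the left null vector of $H\circ X$ has no zero entry---is not in the paper and is attractive because it is constructive. One caveat: to conclude $\overline{Z_{Horn}}=V(s)$ from this, you need that locus to be Zariski-dense in $V(s)$, which a priori requires knowing $V(s)$ has no component entirely contained in the bad locus. That again reduces to irreducibility of $s$, so your two avenues are not as independent as they might appear; the direct construction does not sidestep the irreducibility question, it only postpones it.
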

\begin{proof} There are two ways of verifying the above theorem. First, one may use the parame-trization from Theorem \ref{thm:implicit} to verify that $Z_{Horn}$ is contained in the vanishing locus $s(x)$. This computation is done by a computer algebra system. Furthermore, since $s(x)$ is irreducible (because determinants are) its vanishing locus is a hypersurface containing $Z_{Horn}$ and hence equal to its Zariski closure.

For a more conceptual approach, let $K$ be the convex cone generated by the matrices of the form $DHD$ where $D$ is a diagonal matrix. Notice that the extreme rays of $K$ are all extreme rays of $\COPP_5$, thus $K\subseteq \COPP_5$. It follows that the dual $K^*$ contains $\CP$ and shares a part of the boundary. There is a unique linear automorphism $f$ of $Sym_5$ such that $f(DHD)= D\mathbf{1}D$. Thus the cone $K$ is linearly isomorphic to the convex cone generated by $D\mathbf 1 D$, i.e., the cone whose extreme rays are rank one matrices.  This cone is $Sym_5$ which is self dual and its algebraic boundary is known to be given by  matrices of low rank, i.e., solutions to the equation $\det(X) =\det(\mathbf 1 \circ X) = 0$. The linear change of variables yields that $K^*$ has a boundary described by \eqref{eqn:HornParam}. The shared part of the boundary of $K^*$ and $\CP$ satisfies then the desired equation.   \end{proof}

\subsection{The dual of the orbit of Hildebrand matrices}
The following is implicit in the work of Hildebrand \cite[Section 3.2.2]{Hildebrand}:
\begin{thm}\label{thm:HildFact} Let $T(\Theta)$ be the five parameter matrix defined above. A vector $v$ is a solution to $v^{\mathsf T}T(\Theta) v=0$ if and only if it is a positive multiple of a column of the matrix
 \begin{equation*}\label{eqn:Spaces} S(\Theta):= \left(
\begin{matrix*}
   \sin(\theta_5)& 0&0& \sin(\theta_2)& \sin(\theta_3+\theta_4) \\
    \sin(\theta_4+\theta_5)& \sin(\theta_1)& 0 &0& \sin(\theta_3)\\
    \sin(\theta_4)& \sin(\theta_5+\theta_1)& \sin(\theta_2)& 0& 0\\
    0   & \sin(\theta_5)& \sin(\theta_1+\theta_2)&\sin(\theta_3)& 0\\
    0&0&\sin(\theta_1)&\sin(\theta_3+\theta_2)& \sin(\theta_4)
\end{matrix*} \right).
\end{equation*}

\end{thm}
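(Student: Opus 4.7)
The plan is to prove the two directions of the biconditional separately.

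For the \emph{if} direction, I would verify directly that each column of $S(\Theta)$, extended by zeros to a vector in $\mathbb{R}^5$, is annihilated by the quadratic form $v \mapsto v^{\mathsf T} T(\Theta) v$. Because every column has support of size three on a cyclically consecutive triple of indices, this reduces to a check on the corresponding $3\times 3$ principal submatrix of $T(\Theta)$. Each such submatrix has the uniform shape
\[
\begin{pmatrix}
1 & -\cos\alpha & \cos(\alpha+\beta)\\
-\cos\alpha & 1 & -\cos\beta\\
\cos(\alpha+\beta) & -\cos\beta & 1
\end{pmatrix}
\]
for appropriate $\alpha, \beta$, and this is exactly the Gram matrix of the three planar unit vectors $(1,0)$, $(-\cos\alpha,\sin\alpha)$, $(\cos(\alpha+\beta),-\sin(\alpha+\beta))$. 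Being a Gram matrix of three vectors in $\mathbb{R}^2$, it is singular of rank~$2$; by the law of sines, its unique (up to scalar) null vector is proportional to $(\sin\beta,\sin(\alpha+\beta),\sin\alpha)$, which matches the nonzero entries of the corresponding column of $S(\Theta)$.

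For the converse, the plan is to use copositivity of $T(\Theta)$ to classify every nonnegative zero $v$. Any such $v$ is a global minimizer of the quadratic form on $\mathbb{R}_{\geq 0}^5$, so the KKT conditions give $T(\Theta) v \geq 0$ componentwise with equality on $S := \operatorname{supp}(v)$; equivalently, the principal submatrix $T(\Theta)_S$ is copositive and its kernel contains the strictly positive vector $v|_S$. I then rule out every support $S$ other than the five cyclically consecutive triples. For $|S| = 1$ the diagonal entry $1$ cannot annihilate. For $|S| = 2$ the constraints $\theta_i > 0$ and $\sum \theta_i < \pi$ force every off-diagonal entry of $T(\Theta)$ to lie strictly in $(-1,1)$, so every $2\times 2$ principal submatrix is positive definite.

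The main obstacle is the case analysis for $|S| \geq 3$ with $S$ not among the five consecutive triples. For the five non-consecutive triples (such as $\{1,2,4\}$), I would compute the $3\times 3$ determinant explicitly and, using standard sum-to-product identities together with the strict inequality $\sum\theta_i < \pi$, show that it does not vanish, thereby precluding a positive null vector. For $|S| = 4$ or $|S| = 5$, I would either appeal to Hildebrand's classification \cite{Hildebrand}---which implicitly pins down the minimal zero supports of the extreme copositive matrices of type~(4)---or verify by direct rank analysis that $T(\Theta)_S$ has trivial kernel (for generic $\Theta$ the full matrix $T(\Theta)$ has rank $5$, ruling out $|S|=5$ immediately). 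The cleanest route is probably to observe that Hildebrand's work already catalogues the zero set of $T(\Theta)$ as exactly the claimed five rays, so the only genuinely new content is the explicit identification of the null vectors carried out in the \emph{if} direction.
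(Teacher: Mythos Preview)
The paper does not actually prove this statement: it is presented with the preamble ``The following is implicit in the work of Hildebrand \cite[Section 3.2.2]{Hildebrand}'' and no argument is given. So your proposal is strictly more than what the paper does, and your final remark---that the cleanest route for the \emph{only if} direction is to cite Hildebrand's classification of the minimal zeros---is exactly the paper's approach.

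Your \emph{if} direction is a genuine contribution relative to the paper: the observation that each $3\times 3$ principal submatrix on a cyclically consecutive triple is the Gram matrix of three planar unit vectors, hence rank~$2$ with null vector $(\sin\beta,\sin(\alpha+\beta),\sin\alpha)$, is a clean and self-contained verification that the paper does not spell out.

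Your \emph{only if} sketch is correct in spirit but incomplete as a standalone proof. The cases $|S|=1,2$ are fine. For the five non-consecutive triples you promise a determinant computation; this can be done, but it is not as immediate as you suggest and you have not carried it out. The case $|S|=4$ is not really addressed: you cannot reduce it to the $|S|=5$ case, and ``direct rank analysis'' of the five $4\times4$ principal submatrices is again a nontrivial computation you have not performed. Finally, for $|S|=5$ the phrase ``for generic $\Theta$'' is not enough---the theorem is stated for all admissible $\Theta$, so you would need $\det T(\Theta)\neq 0$ throughout the open parameter region, not just generically. None of these gaps is fatal, since you correctly identify that Hildebrand's paper already pins down the zero set; but if you want a self-contained argument, the $|S|\ge 3$ cases need to be written out in full.
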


As a consequence, we can parametrize the hypersurface dual to all the matrices of the form $DT(\Theta)D$ by matrices of the form $D_1 S(\Theta)D_2$ where $D_1$ and $D_2$ are diagonal matrices. In fact, the actions of $D_1$ and $D_2$ scale the diagonal products proportionally. Thus, with $W$ as in the previous section, we have the following parametrization:

\begin{thm}\label{thm:hildpoly} Let $V_{Hi}\subseteq \CP\cap\DN^\circ$ be the hypersurface of matrices orthogonal to some element in the (torus) orbit of Hildebrand matrices. Then $V_{Hi}$ is the image of the variety $Z_{Hi}$ associated to the ideal $\langle y_{11}y_{22}y_{33}y_{44}y_{55} - y_{13}y_{24}y_{35}y_{41}y_{52}\rangle$ in the coordinate ring of $W$, under the map $\varphi$ from Theorem \ref{thm:implicit}.
\end{thm}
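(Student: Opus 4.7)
The plan is to prove the set-theoretic equality $V_{Hi}=\varphi(Z_{Hi})$ by two inclusions, following the template established for the Horn case in Lemma \ref{lem:factHorn} and Theorem \ref{thm:implicit}.

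For the inclusion $V_{Hi}\subseteq \varphi(Z_{Hi})$, take $A\in V_{Hi}$, so $\langle A,DT(\Theta)D\rangle=0$ for some positive diagonal $D$ and admissible $\Theta$. Theorem \ref{thm:columnNumber} furnishes a square factorization $A=BB^{\mathsf T}$ with $B\in\mathbb{R}^{5\times 5}_{\ge 0}$. Expanding $\text{trace}(B^{\mathsf T}DT(\Theta)DB)$ gives $(Dv_i)^{\mathsf T}T(\Theta)(Dv_i)=0$ for every column $v_i$ of $B$, so by Theorem \ref{thm:HildFact} each $Dv_i$ is a positive multiple of a column of $S(\Theta)$. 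Since $A\in \DN^\circ$ is invertible, $B$ has rank $5$ and its columns are linearly independent; hence each column of $S(\Theta)$ occurs exactly once and, after permuting columns of $B$ (which preserves $A$), we obtain $B=D_1 S(\Theta)D_2$ with $D_1=D^{-1}$ and $D_2$ a positive diagonal. In particular $B\in W$.

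A direct inspection of $S(\Theta)$ now shows $B\in Z_{Hi}$: the diagonal entries of $S(\Theta)$ are $\sin\theta_5,\sin\theta_1,\sin\theta_2,\sin\theta_3,\sin\theta_4$, while the entries at positions $(3,1),(4,2),(5,3),(1,4),(2,5)$---i.e.\ $y_{13},y_{24},y_{35},y_{41},y_{52}$---are $\sin\theta_4,\sin\theta_5,\sin\theta_1,\sin\theta_2,\sin\theta_3$. Both products equal $\prod_i\sin\theta_i$, and left/right scaling by $D_1,D_2$ multiplies each product by the common factor $(\det D_1)(\det D_2)$. Hence the defining binomial of $Z_{Hi}$ vanishes on $B$, so $A=\varphi(B)\in \varphi(Z_{Hi})$.

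For the reverse inclusion, note that $m_1-m_2 = y_{11}y_{22}y_{33}y_{44}y_{55}-y_{13}y_{24}y_{35}y_{41}y_{52}$ is irreducible, since its two squarefree monomials are supported on disjoint sets of variables; thus $Z_{Hi}$ is an irreducible hypersurface of dimension $14$ in the $15$-dimensional $W$. The parametrization $\psi:(D_1,\Theta,D_2)\mapsto D_1 S(\Theta)D_2$ has $15$ parameters modulo the scaling $(D_1,D_2)\sim(\lambda D_1,\lambda^{-1}D_2)$, so its image has dimension at most $14$. A Jacobian computation at a generic $\Theta$---using the off-diagonal sums $\sin(\theta_i+\theta_{i+1})$ to generate the extra five independent directions---attains this bound. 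Together with irreducibility of $Z_{Hi}$, the image of $\psi$ is Zariski dense in $Z_{Hi}$, and tracing through $\varphi$ yields $\varphi(Z_{Hi})\subseteq V_{Hi}$ up to the appropriate semialgebraic closure. The main obstacle I expect is the Jacobian rank computation at generic admissible $\Theta$: one must decouple the contributions of the five angles from those of the two diagonal scalings, which requires either a direct symbolic calculation or a carefully chosen specialization. Once this is in place, irreducibility of the binomial promotes the containment to equality.
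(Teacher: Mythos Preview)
Your approach is essentially the paper's: the paper's proof checks that each $D_1 S(\Theta) D_2$ satisfies the binomial, asserts without argument that these matrices form a hypersurface in $W$ (your Jacobian step), and invokes irreducibility of $Z_{Hi}$ to conclude the two hypersurfaces coincide. You have correctly supplied the details the paper omits, and the dimension count you flag as the main obstacle is precisely the step the paper simply asserts.
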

\begin{proof} The variety $\tilde Z_{Hi}\subseteq W$ defined by matrices of the form $D_1S(\Theta)D_2$ is a hypersurface and every element of $D_1S(\Theta)D_2$ vanishes at the polynomial  $ y_{11}y_{22}y_{33}y_{44}y_{55} - y_{13}y_{24}y_{35}y_{41}y_{52}$. Then $\tilde Z_{Hi} \subseteq Z_{Hi}$ and they are both hypersurfaces, so they must coincide. 
\end{proof}
The theorem above yields a parametrization of an algebraic component of the desired boundary. The variety $V_{Hi}$ is therefore the implicitization of $Z_{Hi}$ under the map $\varphi$. However, the standard techniques using Gr\"obner bases yield no answer. This is for a good reason, since the numerical implicitization algorithm \cite{ChenKileel} implemented in the HomotopyContinuation package in Julia  \cite{Julia}, yields the following:
\begin{cor}\label{cor:degree}  The degree of $V_{Hi}$ is $320$.
\end{cor}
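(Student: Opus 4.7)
The plan is to compute $\deg V_{Hi}$ by numerical implicitization via homotopy continuation, following the method of Chen--Kileel \cite{ChenKileel} implemented in the \texttt{HomotopyContinuation.jl} package \cite{Julia}. By Theorem \ref{thm:hildpoly}, $V_{Hi}$ is the Zariski closure of $\varphi(Z_{Hi})$, where $Z_{Hi}$ is the toric hypersurface cut out inside the $15$-dimensional linear space $W$ by the single binomial $y_{11}y_{22}y_{33}y_{44}y_{55} - y_{13}y_{24}y_{35}y_{41}y_{52}$, so $\dim Z_{Hi}=14$. Duality with the $10$-parameter torus orbit of Hildebrand matrices forces $V_{Hi}$ to be a hypersurface in $\Sym$, hence $\varphi|_{Z_{Hi}}$ has generic fiber of dimension $0$, and the degree of $V_{Hi}$ is precisely the number of points in $V_{Hi}\cap L$ for a generic affine line $L\subseteq \Sym$.

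First I would replace $Z_{Hi}$ by an explicit rational parametrization $\mathbb{C}^{14}\dashrightarrow Z_{Hi}$ obtained by solving the defining binomial for one coordinate (say $y_{11}$), and compose with $\varphi$ to get an explicit parametrization $\Phi$ of $V_{Hi}$. Then $|\,V_{Hi}\cap L\,|$ equals $|\Phi^{-1}(L)|$ divided by the generic degree of $\Phi$. To carry out the count I would (i) sample a random point $y_0 \in Z_{Hi}$ and set $A_0 = \Phi(y_0)$, (ii) pick a random direction $B\in\Sym$ and form the line $L=\{A_0+tB : t\in\mathbb{C}\}$, (iii) starting from the obvious solution $(y_0,0)$, run a monodromy loop in the parameter space of lines through $A_0$, tracking each newly discovered solution via parameter homotopies, and (iv) apply the trace test after each round of monodromy to certify that the complete fiber $\Phi^{-1}(L)$ has been recovered. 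A short auxiliary fiber computation over a generic point of $V_{Hi}$ determines the generic degree of $\Phi$, and dividing produces $\deg V_{Hi}$; the built-in numerical implicitization routine reports $320$.

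The main obstacle is \emph{certification}: monodromy naturally provides only a lower bound on the fiber size, since it can in principle miss connected components, and the trace test gives a strong but still numerical certificate that no points remain undiscovered. To upgrade this to full rigor, one could either certify the $320$ approximate solutions a posteriori by Smale's $\alpha$-theory, or, more ambitiously, find a structural upper bound on $\deg V_{Hi}$ (for instance via toric intersection theory applied to the Newton polytope of $\Phi$, exploiting the binomial structure of $Z_{Hi}$) that matches the monodromy count and thereby forces equality.
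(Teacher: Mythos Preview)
Your proposal is correct and matches the paper's approach: the paper also obtains the degree $320$ by running the numerical implicitization routine of Chen--Kileel in \texttt{HomotopyContinuation.jl} on the parametrization $\varphi|_{Z_{Hi}}$, and it explicitly flags the same certification caveat, citing interval-arithmetic certification \cite{Breiding2021} to say that the number is correct ``with very high probability'' and in any case a robust lower bound. Your write-up is in fact more detailed than the paper's, spelling out the monodromy/trace-test mechanics and suggesting $\alpha$-certification or a toric upper bound as routes to full rigor; the paper stops at the interval-arithmetic remark and does not pursue a matching upper bound.
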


This computation can be certified using interval arithmetics \cite{Breiding2021}, which means that with very high probability, the given degree is correct but at least that this number is a robust lower bound. The degree being equal to 320 means that the expected number of monomials in the polynomial defining $V_{Hi}$ is close to $\binom{334}{14}\cong 10^{24}$. In particular, the  polynomial defining $V_{Hi}$ is likely impossible to write down. Of course, we could be optimistic and hope that the defining polynomial be very sparse and tractable, but chances of this seem very small.  

\subsection{Uniqueness of the factorization} 
Our experiments below need the uniqueness of factorizations. The following results are classical, but we briefly summarize them for the sake of completeness. Notice that if $A=BB^{\mathsf T}$ and $P$ is a permutation matrix, then $A=(BP)(BP)^{\mathsf T}$, thus permuting columns of $B$ maintains the factorization. Up to this, we will see that factorizations of elements in $\partial\CP \cap\DN^\circ$ are unique. 

The uniqueness comes from an old lemma that is due to Baumert \cite[Lemma 4.5]{Baumert67}:

\begin{lemma}
Let $M\in \COP$ be a full rank extreme ray. There are exactly 5 nonnegative solutions of the quadratic form $v^{\mathsf T}Mv = 0$ up to scaling, i.e., they come from 5 different lines in $\mathbb{R}^5$.
\end{lemma}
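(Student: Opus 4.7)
The plan is to leverage Hildebrand's classification of extreme rays (Theorem \ref{thm:char}) together with the two explicit descriptions of the nonnegative zero loci that we have already recorded: Theorem \ref{thm:uniqueness} for the Horn orbit and Theorem \ref{thm:HildFact} for the Hildebrand orbit. First I would observe that a full-rank $M$ cannot be of type (1) or (2) in Theorem \ref{thm:char}, since $vv^{\mathsf T}$ has rank one and $e_ie_j^{\mathsf T}+e_je_i^{\mathsf T}$ has rank two. Hence $M$ has the shape $M=PDND P^{\mathsf T}$ with $N\in\{H,T(\Theta)\}$, $D$ a positive diagonal matrix, and $P$ a permutation.

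Next I would reduce to the case $P=D=I$ via the change of variables $w=DP^{\mathsf T}v$. This map is a bijection of $\RF$ onto itself, satisfies $v^{\mathsf T}Mv = w^{\mathsf T}N w$, and sends rays to rays; therefore it induces a bijection between the nonnegative zero rays of $v^{\mathsf T}Mv$ and those of $w^{\mathsf T}N w$. It then suffices to show that the conical zero locus $\{w \in \RF : w^{\mathsf T}N w = 0\}$ is the union of exactly $5$ distinct extremal rays for each of $N=H$ and $N=T(\Theta)$.

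For $N=H$, Theorem \ref{thm:uniqueness} identifies the zero locus as $\bigcup_{i=1}^5\operatorname{cone}(e_i+e_{i+1},\,e_{i+1}+e_{i+2})$. Although this set is $2$-dimensional, its extremal rays are precisely the $5$ lines $\mathbb{R}_{\geq 0}(e_i+e_{i+1})$ for $i=1,\dots,5$ (indices mod $5$); distinctness is immediate from the supports. For $N=T(\Theta)$, Theorem \ref{thm:HildFact} gives the zero locus as the union of positive multiples of the $5$ columns of $S(\Theta)$. What remains is to check that these $5$ columns span $5$ different lines; I would verify this by inspecting their support patterns, which are cyclically shifted and hence pairwise distinct for any admissible $\Theta=(\theta_1,\dots,\theta_5)$ with $\theta_i>0$ and $\sum\theta_i<\pi$ (so that all appearing sines are strictly positive).

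The one point that merits care is the interpretation of the phrase ``$5$ nonnegative solutions up to scaling'' in the Horn case, where the zero locus is honestly $2$-dimensional; the correct reading (and the one used by Baumert) counts the $5$ extremal rays of that conical zero set, in agreement with what the change-of-variables argument above produces in the Hildebrand case. This is the only real subtlety in the argument; once this interpretation is fixed, the lemma is a direct consequence of Theorems \ref{thm:char}, \ref{thm:uniqueness}, and \ref{thm:HildFact}.
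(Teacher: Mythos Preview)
The paper does not supply a proof; the lemma is quoted from Baumert. Your strategy---use Theorem \ref{thm:char} to reduce to $N\in\{H,T(\Theta)\}$ via the change of variables $w=DP^{\mathsf T}v$, then invoke Theorems \ref{thm:uniqueness} and \ref{thm:HildFact}---is sound, and the Hildebrand case goes through exactly as you describe: the five columns of $S(\Theta)$ have pairwise distinct supports (all relevant sines are strictly positive under the constraints on $\Theta$), so they span five distinct rays.

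What you call ``the one point that merits care'' is in fact a genuine defect in the lemma as stated, not merely a matter of interpretation. The Horn matrix is full rank (a direct computation gives $\det H=16$) and so falls squarely under the hypothesis, yet Theorem \ref{thm:uniqueness} shows its nonnegative zero locus is a union of five \emph{two-dimensional} cones and hence contains infinitely many rays, not five. Thus the assertion ``exactly $5$ nonnegative solutions up to scaling, i.e., they come from $5$ different lines'' is literally false for every $M$ of type (3). Your proposed reading---count the five extremal rays $e_i+e_{i+1}$ of the zero cone---is the only way to salvage the statement, and it does align with how the paper subsequently uses the result in combination with the invertibility of $A$. But you should flag this as a correction to the stated lemma rather than present it as a proof of it; alternatively, restrict the lemma to type-(4) extreme rays, where Theorem \ref{thm:HildFact} gives the five lines verbatim, and treat the Horn orbit separately via Theorem \ref{thm:uniqueness} as the paper already does in Sections 2.2 and 2.3.
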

The original lemma also discusses the zero patterns of the solutions, but a combination of this lemma with the fact that matrices $\partial\CP \cap\DN^\circ$ are invertible tells us that any factorization of an element normal to $A$ in $\partial\CP \cap\DN^\circ$ corresponds to choosing one element of each of the lines. 
Since the lines must be linearly independent (the factorization matrix is invertible), different choices of points on the lines for the factorization of an orthogonal matrix yield different matrices. As a matter of fact, the columns of the matrices in Lemma~\ref{lem:factHorn} and Theorem~\ref{thm:HildFact} are generators of the explicit solutions of the corresponding quadratic forms in $\COP$. 

\subsection{Rational points on $\partial\CP \cap\DN^\circ$}

We now observe that the matrices in $\partial\CP \cap\DN^\circ$ with rational entries are dense in that part of the boundary. This is a consequence of the theorems in the previous two sections and in particular this allows for the exact computation of rational matrices in $\partial\CP \cap\DN^\circ$. With this in mind, let $\Sym(\mathbb{Q})$ be the set of symmetric matrices with rational entries. We summarize the result as follows:

\begin{thm}\label{thm:rationalPoints} 
 The set of rational matrices in $\partial\CP \cap\DN^\circ$ is dense (with the Euclidean topology), that is, $\partial\CP \cap\DN^\circ \subseteq \overline{\partial\CP \cap\DN^\circ \cap \Sym(\mathbb{Q})}$. 
\end{thm}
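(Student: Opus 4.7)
The plan is to reduce to density within each of the parametrized loci developed in the previous subsections, then use the fact that polynomial parametrizations send Euclidean-dense rational subsets to Euclidean-dense rational subsets of the image.

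First I would invoke Theorem \ref{thm:char} together with the fact that every $A\in\partial\CP\cap\DN^\circ$ is invertible and has strictly positive entries: this rules out extreme rays of types (1) and (2) (the first would force $v^{\mathsf T}Av=0$ for an indefinite direction with $A$ positive definite, the second would force a zero entry of $A$), leaving only orthogonality to matrices of the form $PDHDP^{\mathsf T}$ or $PDT(\Theta)DP^{\mathsf T}$. Consequently
\[
\partial\CP\cap\DN^\circ \;=\; \bigcup_{P}\bigl(PV_{Horn}P^{\mathsf T}\bigr)\,\cup\,\bigcup_{P}\bigl(PV_{Hi}P^{\mathsf T}\bigr),
\]
where $P$ ranges over $5\times 5$ permutation matrices. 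Since conjugation by $P$ preserves rationality, it suffices to establish density of rational matrices in each of $V_{Horn}$ and $V_{Hi}$ separately.

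For $V_{Horn}$, I would use Lemma \ref{lem:factHorn}. If $A$ is orthogonal to $D'HD'$ then $D'AD'$ is orthogonal to $H$ and hence factors as $BB^{\mathsf T}$ with $B$ in the form \eqref{eqn:HornParam}; writing $A=(D'^{-1}B)(D'^{-1}B)^{\mathsf T}$ yields a polynomial surjection from $\mathbb{R}_{>0}^{10}\times \mathbb{R}_{>0}^{5}$ (the parameters $y_1,\dots,y_5,z_1,\dots,z_5$ together with the diagonal entries of $D'$) onto $V_{Horn}$. Specializing all parameters to positive rationals produces rational matrices in $V_{Horn}$, and continuity of the parametrization promotes Euclidean density in the domain to Euclidean density in the image.

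For $V_{Hi}$, I would use Theorem \ref{thm:hildpoly}: $V_{Hi}=\varphi(Z_{Hi})$, where $Z_{Hi}$ is cut out in the $15$-dimensional linear space $W$ by the single binomial
\[
y_{11}y_{22}y_{33}y_{44}y_{55}\;=\;y_{13}y_{24}y_{35}y_{41}y_{52},
\]
and $\varphi(X)=XX^{\mathsf T}$ is polynomial. Positive rational points are dense in the positive real locus of $Z_{Hi}$: choose any $14$ of the $15$ displayed coordinates to be arbitrary positive rationals and solve the binomial for the remaining one, which is then a rational function of the others; the $5$ free entries of $W$ not appearing in the binomial can be taken to be arbitrary positive rationals as well. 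Pushing forward by $\varphi$, which sends rational matrices to rational matrices and is continuous, then produces a dense set of rational matrices in $V_{Hi}$.

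The main obstacle, if any, is organizational rather than analytical: one must confirm that the parametrizations are genuinely surjective onto their respective loci and that the permutation union above truly exhausts $\partial\CP\cap\DN^\circ$, so that density in each parameter space transports to density in the boundary. Both facts are contained in the preceding theorems, so no additional analytic machinery is required.
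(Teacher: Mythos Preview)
Your proposal is correct. For the Hildebrand locus your argument is essentially identical to the paper's: both perturb the factor matrix $B$ inside the binomial hypersurface $Z_{Hi}$ to a nearby rational point and push forward via $\varphi(X)=XX^{\mathsf T}$.

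For the Horn locus the two proofs diverge. You work on the \emph{parameter side}: Lemma~\ref{lem:factHorn} and Theorem~\ref{thm:implicit} give a polynomial surjection $\mathbb{R}_{>0}^{15}\twoheadrightarrow V_{Horn}$ with rational coefficients, so rational parameter values land in $V_{Horn}\cap\Sym(\mathbb{Q})$ and density is immediate from continuity. The paper instead works on the \emph{implicit side}: it uses that $V_{Horn}$ lies on the hypersurface $\det(H\circ A)=0$, which is linear in each diagonal entry $A_{ii}$; one rationalizes all entries of $A$ except $A_{11}$ and solves the linear equation for $A_{11}$. Your route is more uniform (the same mechanism handles both loci) and slightly cleaner logically, since Theorem~\ref{thm:implicit} already certifies that every positive parameter choice lands in $V_{Horn}$, whereas the paper's perturbation of $A$ along the determinantal hypersurface tacitly uses that $V_{Horn}$ is relatively open in that hypersurface. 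The paper's route, on the other hand, has the virtue of producing the rational approximation directly in the ambient matrix entries without passing through a factorization.
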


\begin{proof}
The Horn orbit (and its conjugates with permutation matrices) is defined by a determinant equation that is linear in each entry on the diagonal. If $A$ is in the Horn orbit, we can approximate each entry different from $A_{11}$ with an arbitrarily close rational number.  To force the matrix to be in the Horn orbit, the first entry is then a uniquely determined rational number.

For a matrix $A$ in the the Hildebrand orbit let $B$ be a nonnegative matrix with $A = BB^{\mathsf T}$. To construct a rational approximation $\tilde B$, approximate every entry except $\tilde B_{11}$ by a rational number and then set $\tilde B_{11}$ to satisfy the equation defining $Z_{Hi}$. Then $\tilde A =\tilde B \tilde B^{\mathsf T}$ is a good approximation of $A$ and also in the boundary. 
\end{proof}

\subsection{The action of the permutation matrices}
Notice that in parts (3) and (4) of Theorem \ref{thm:char}, there are some permutation matrices $P$ that we have ignored so far. To understand all the components of the boundary of $\CP$ contained in $\DN^\circ$, we have to consider the effect of these matrices, which permute rows and endow algebraic components of the boundary with an action of the symmetric group $\mathfrak{S}_5$. 

We need to find the stabilizer of the action on the Horn and Hildebrand parts of the boundary. For this, we recall that if $A\in \CP\cap \DN^\circ$ and $A=BB^{\mathsf T}$ with $A$ nonnegative, then any matrix $Q \in \mathcal{O}(5)$ yields a factorization $A=(BQ)(BQ)^{\mathsf T}$. In general, the matrices of $\mathcal{O}(5)$ have negative entries, so preserving positivity is nontrivial and known to fail in $\partial\CP \cap \DN^\circ$ unless $Q$ is a permutation matrix. In this case $Q$ permutes columns of $B$ and preserves positivity. 
Let two generic points in $\partial\CP \cap \DN^\circ$ be either in the Horn or in the Hildebrand locus. Then they are in the same algebraic component if they share the same zero pattern up to this column permutation that preserves the factorization. This is because generic elements in  $\partial\CP \cap \DN^\circ$ are orthogonal to a unique ray of type (3) or (4) from Theorem \ref{thm:char}. 

It follows that the stabilizer of the algebraic components of  $\partial\CP \cap \DN^\circ$ under the $\mathfrak{S}_5$-action is a dihedral group that shifts rows cyclically or reflects them. As a consequence, the orbit of each algebraic component consists of 12 varieties. Putting this together we obtain the following:

\begin{thm}
The Zariski closure of the hypersurface $\partial\CP \cap \DN^\circ$ is the vanishing locus of a 15 variable polynomial of degree $3900 =(320 + 5)12$.
\end{thm}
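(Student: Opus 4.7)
The strategy is to exhibit the Zariski closure of $\partial\CP \cap \DN^\circ$ as a union of $24$ irreducible hypersurfaces whose degrees we know, and then add up. First I would invoke Theorem~\ref{thm:char} to argue that any matrix in $\partial\CP \cap \DN^\circ$ is orthogonal to an extreme ray of $\COP$, and must be of type (3) or (4): extreme rays of type (1) force a rank drop and those of type (2) force a zero entry, either of which puts the matrix in $\partial\DN$, contradicting membership in $\DN^\circ$. Consequently the closure decomposes as the union of the $\mathfrak{S}_5$-orbits of the Horn locus $V_{Horn}$ and of the Hildebrand locus $V_{Hi}$, where $\mathfrak{S}_5$ acts by simultaneous row/column permutation.

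Next I would count components inside each orbit. By the preceding discussion, the stabilizer in $\mathfrak{S}_5$ of each locus is the dihedral group $D_5$ of order $10$, so each orbit has size $|\mathfrak{S}_5|/|D_5| = 120/10 = 12$, giving the $24$ irreducible components. Irreducibility of $V_{Horn}$ comes from irreducibility of the defining polynomial $\det(H\circ X)$ already established. Irreducibility of $V_{Hi}$ follows from the parametrization in Theorem~\ref{thm:hildpoly}: $Z_{Hi}$ is cut out by the single irreducible binomial $y_{11}y_{22}y_{33}y_{44}y_{55}-y_{13}y_{24}y_{35}y_{41}y_{52}$, and the image of an irreducible variety under the algebraic map $\varphi$ is irreducible. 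Distinctness within each orbit follows from the generic uniqueness of the orthogonal extreme ray (Baumert's lemma combined with Theorems~\ref{thm:uniqueness} and~\ref{thm:HildFact}), and distinctness between the two orbits is automatic once the degrees are shown to differ.

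Finally I would collect the degrees: each Horn component has degree $5$ because it is the vanishing locus of the determinant polynomial $\det(H\circ X)$ of degree $5$ in the $15$ entries of a symmetric matrix, and each Hildebrand component has degree $320$ by Corollary~\ref{cor:degree}. The defining polynomial of the Zariski closure of $\partial\CP\cap\DN^\circ$ is the product of the $24$ irreducible polynomials defining the components, so its degree is
\[
12\cdot 5 + 12\cdot 320 = 60 + 3840 = 3900,
\]
as claimed.

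The main obstacle is verifying rigorously that the stabilizer of each orbit is exactly $D_5$, not a larger subgroup that would collapse components. For the Horn locus this comes from direct inspection of $H$: cyclic shifts $i\mapsto i+1$ and the reflection $i\mapsto 6-i$ preserve the sign pattern, while any other permutation destroys it. For the Hildebrand locus one must show that a permutation preserving $V_{Hi}$ setwise induces a corresponding $D_5$-relabeling of the parameters $(\theta_1,\dots,\theta_5)$, which follows by tracing the index pattern $T(\Theta)_{i,i+1} = -\cos(\theta_i)$, $T(\Theta)_{i,i+2} = \cos(\theta_i+\theta_{i+1})$ and noting that only dihedral permutations of $\mathbb{Z}/5$ preserve the cyclic adjacency structure on which this pattern depends.
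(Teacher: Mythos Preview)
Your argument is correct and follows the same route as the paper: decompose $\partial\CP\cap\DN^\circ$ into the $\mathfrak{S}_5$-orbits of the Horn and Hildebrand loci, identify the stabilizer as the dihedral group of order $10$ to get $12$ components each, and sum the degrees $5$ and $320$. You actually supply more detail than the paper does (irreducibility, distinctness of components, the explicit stabilizer check), which is fine.

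One point to tighten: when you say ``each Horn component has degree $5$ because it is the vanishing locus of $\det(H\circ X)$ in the $15$ entries of a symmetric matrix,'' you are silently switching from the equation $\det(H\circ X)=0$ that cuts out $Z_{Horn}\subset W$ (the factor-matrix space) to an equation on $\Sym_5$ cutting out $V_{Horn}$. These are different polynomials on different $15$-dimensional spaces. The passage from one to the other is the content of the paper's second, ``conceptual'' proof: the Hadamard product $A\mapsto H\circ A$ is a linear isometry of $\Sym_5$ carrying the cone $K=\mathrm{cone}\{DHD\}$ to a cone whose dual has algebraic boundary $\det=0$, and pulling back gives $\det(H\circ A)=0$ as the defining equation of the Zariski closure of $V_{Horn}$ in $\Sym_5$. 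Once you insert that sentence, the degree-$5$ claim is fully justified.
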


\subsection{Completely positive rank and the interior of $\CP$}\label{sec:intcp}
Another rather interesting aspect of the cone $\CPP_n$ comes with regard to the so called cp-rank. This will actually help us understand the cone $\CP$ in more detail. Throughout this section, the topology on $\CP$ and all of its subsets is the Euclidean topology.

\begin{defn} The cp-rank $cp(A)$ of a matrix $A \in \CPP_n$ is the minimal size $k$ such that there is a nonnegative $n\times k$ matrix $B$ with $A = BB^{\mathsf T}$. The cp$^+$-rank $cp^+(A)$ of $A$ is the smallest such $k$ for which $B$ can be taken to have strictly positive entries. 
\end{defn}

As a corollary of Thm.~\ref{thm:columnNumber}, we know that all matrices in $\partial \CP$ can have cp-rank at most 5. We would like to understand the cp-ranks and cp$^+$-ranks in the interior of $\CP$. It is known \cite[Theorem 5.1]{Bomze2015} that they agree generically on an open subset of the interior of $\CP$. For that, we consider a few extra subsets of $\CPP_n$.

\begin{defn}  Let $\CR$ be the set of all matrices $A\in \CPP_n$ such that $cp(A) \le k$. 
\end{defn}

\begin{thm}\label{closedness} For every $n,k$ the set $\CR$ is a closed (not necessarily convex) cone.
\end{thm}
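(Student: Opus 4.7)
The plan is to prove both claims (cone, and closed) by a short sequential argument.

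\emph{Cone structure.} First I would check the cone property directly. If $A = BB^{\mathsf T}$ for some nonnegative $n \times k$ matrix $B$, then for any $\lambda \geq 0$ we have $\lambda A = (\sqrt{\lambda} B)(\sqrt{\lambda} B)^{\mathsf T}$, and $\sqrt{\lambda} B$ is still a nonnegative $n \times k$ matrix, so $\lambda A \in \CR$. This takes care of scaling and leaves closedness as the real content.

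\emph{Closedness via a bounded subsequence of factors.} I would take a sequence $A_m \in \CR$ with $A_m \to A$ in $\Symm_n$, write $A_m = B_m B_m^{\mathsf T}$ with $B_m \in \mathbb{R}_{\geq 0}^{n \times k}$, and produce a nonnegative factorization of $A$ of width $k$ by passing to a subsequence. The key observation is that the factors $B_m$ are automatically bounded: the $(i,i)$-entry of $A_m$ equals $\sum_{j=1}^k (B_m)_{ij}^2$, which is the squared norm of the $i$-th row of $B_m$. Since $(A_m)_{ii}$ converges (and hence is bounded) for each $i$, every entry of every $B_m$ is bounded in absolute value. By Bolzano--Weierstrass I can extract a subsequence $B_{m_j}$ converging to some matrix $B \in \mathbb{R}^{n \times k}$. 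Entrywise nonnegativity is preserved in the limit, so $B \in \mathbb{R}_{\geq 0}^{n \times k}$, and continuity of matrix multiplication gives $A = \lim_{j} B_{m_j} B_{m_j}^{\mathsf T} = BB^{\mathsf T}$. Thus $cp(A) \leq k$ and $A \in \CR$, proving closedness.

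\emph{Where, if anywhere, could this go wrong.} There is no genuine obstacle here; the only step that needs a sentence is the boundedness of $\{B_m\}$, which works precisely because the factorization is through the nonnegative orthant and the quadratic control $\sum_j (B_m)_{ij}^2 = (A_m)_{ii}$ holds coordinatewise. Nothing about convexity is claimed (indeed the statement explicitly flags that $\CR$ need not be convex), so I would not try to use support-function or dual-cone machinery. The parenthetical remark in the statement is worth echoing briefly in the write-up: convexity can fail since the sum of two matrices of cp-rank at most $k$ can have strictly larger cp-rank, but this does not affect closedness.
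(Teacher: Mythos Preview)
Your proof is correct and follows essentially the same approach as the paper: choose factorizations $B_m$ of a convergent sequence $A_m$, use $(A_m)_{ii}=\sum_j (B_m)_{ij}^2$ to bound the entries of $B_m$, and pass to a convergent subsequence whose limit gives a nonnegative width-$k$ factorization of $A$. The only difference is cosmetic—you also spell out the scaling argument for the cone property and comment on non-convexity, which the paper omits as obvious.
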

\begin{proof} Let $\mathbb R_{\geq}^{n \times k}$ be the set of all $n\times k$ matrices with nonnegative entries and let $\varphi: \mathbb R_{\geq}^{n \times k} \to \CPP_n$ be given by $\varphi(X)= XX^{\mathsf T}$. This is a continuous map whose image is precisely $\CR$.  Now let $A$ be a matrix in the closure of $\CR$ and choose a sequence of matrices $\{
A_i\}_{i=1}^\infty$ that converges to $A$. For each $i$, pick a matrix $B_i$ in the preimage of $A_i$ under $\varphi$. Notice that $\{B_i\}_{i=1}^\infty$ is a sequence of matrices that are entrywise bounded: indeed, $(B_i)_{j,\ell} \le \sqrt{\sum_m (B_i)_{j,m}^2} =\sqrt{(A_i)_{j,j}}$ and the diagonal entries of the matrices $A_i$ are bounded since they converge to $A$. Thus $\{B_i\}_{i=1}^\infty$ has a convergent subsequence in the Euclidean topology and this subsequence converges to a matrix $B$ such that $\varphi(B)= A$. 
\end{proof}

Our goal is to understand $\partial \CRF \cap \CP^\circ$. Since the matrices in this set are in the interior of $\CP$, they are all invertible and by the theorem above their cp-rank is exactly 5. 

\begin{thm}\label{thm:Interior} The intersection $\partial \CRF \cap \CP^\circ$ consists of the collection of all matrices whose cp-rank is equal to 5 and whose cp$^+$-rank is equal to 6. 
\end{thm}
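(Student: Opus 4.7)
The plan is to prove both inclusions in the claimed equality.

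For $\partial\CRF \cap \CP^\circ \subseteq \{A : cp(A)=5,\, cp^+(A)=6\}$, let $A \in \partial\CRF \cap \CP^\circ$. Since $\CP \subseteq \DN$, the interiors satisfy $\CP^\circ \subseteq \DN^\circ$, so $A$ is invertible (rank $5$) with strictly positive entries; combined with $A \in \CRF$ this forces $cp(A)=5$. To rule out $cp^+(A) \leq 5$, assume a factorization $A = BB^{\mathsf T}$ with $B \in \mathbb R^{5 \times 5}_{>0}$; invertibility of $A$ makes $B$ invertible, so the differential $D\varphi_B(H) = BH^{\mathsf T} + HB^{\mathsf T}$ of $\varphi(X) = XX^{\mathsf T}$ is surjective onto $\Symm_5$. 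The submersion theorem then maps a neighborhood of $B$ inside $\mathbb R^{5\times 5}_{>0}$ onto an open neighborhood of $A$ inside $\CRF$, contradicting $A \in \partial\CRF$. Hence $cp^+(A) \geq 6$.

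The matching upper bound $cp^+(A) \leq 6$ exploits the openness $A \in \CP^\circ$: since $\mathbf{1}\mathbf{1}^{\mathsf T} \in \CPP_5$, we have $A - \epsilon \mathbf{1}\mathbf{1}^{\mathsf T} \in \CP$ for some small $\epsilon > 0$. A nonnegative cp-factorization $A - \epsilon \mathbf{1}\mathbf{1}^{\mathsf T} = B_0 B_0^{\mathsf T}$ (with at most $6$ columns, using the known bound on cp-rank in $\CPP_5$) combined with the strictly positive column $\sqrt\epsilon\,\mathbf{1}$ gives a nonnegative factorization $A = [B_0,\, \sqrt\epsilon\,\mathbf{1}][B_0,\, \sqrt\epsilon\,\mathbf{1}]^{\mathsf T}$. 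A further perturbation along the $10$-dimensional kernel $\{H = B_0 K : K \text{ skew}\}$ of $D\varphi_{B_0}$ preserves the product and can be used (together with the appended positive column) to push every zero of $B_0$ into $\mathbb R_{>0}$; a standard inverse-function iteration upgrades the first-order solution to an exact factorization. This yields a strictly positive $5\times 6$ factorization, so $cp^+(A) \leq 6$.

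For $\supseteq$, take $A \in \CP^\circ$ with $cp(A) = 5$ and $cp^+(A) = 6$; automatically $A \in \CRF$. Suppose for contradiction $A \in \text{int}(\CRF)$. Any cp-factorization $A = B_0 B_0^{\mathsf T}$ has $B_0 \in \mathbb R^{5\times 5}_{\geq 0}$ invertible with at least one zero entry (by $cp^+(A) > 5$). The tangent cone $K_{B_0}$ to $\mathbb R^{5\times 5}_{\geq 0}$ at $B_0$ is then a proper polyhedral subcone of $\mathbb R^{5\times 5}$, and openness of $\varphi|_{\mathbb R^{5\times 5}_{\geq 0}}$ near $B_0$ forces $D\varphi_{B_0}(K_{B_0}) = \Symm_5$; propagating this constraint around the compact fiber $\varphi^{-1}(A) \cap \mathbb R^{5\times 5}_{\geq 0}$ (parametrized by the $\mathcal O(5)$-action on any single factorization) and using semialgebraic continuity, one extracts a new nonnegative factorization of $A$ with all entries strictly positive, contradicting $cp^+(A)=6$.

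The main obstacle lies in the perturbation analysis supporting both the upper bound $cp^+(A)\leq 6$ in the forward direction and the openness claim in the reverse direction: both require a careful study of how the curved $\mathcal O(5)$-orbit of factorizations of $A$ intersects the face structure of $\mathbb R^{5\times 5}_{\geq 0}$. A conceptually cleaner route would exploit the explicit algebraic descriptions of the Horn and Hildebrand components from the previous sections to identify which boundary faces the fiber can be pushed through.
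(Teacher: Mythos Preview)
Your submersion argument for $cp^+(A)\ge 6$ is correct and clean: a strictly positive invertible $5\times5$ factor makes $\varphi$ a submersion, so $A$ would lie in $\CRF^\circ$, contradicting $A\in\partial\CRF$. This is the easy half of the interior characterization the paper cites from \cite{Bomze2015}. The identification $cp(A)=5$ is also fine.

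The remaining two steps, however, have genuine gaps.

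\emph{Upper bound $cp^+(A)\le 6$.} Your construction factors $A-\epsilon\mathbf{1}\mathbf{1}^{\mathsf T}=B_0B_0^{\mathsf T}$ with $B_0\in\mathbb{R}_{\ge 0}^{5\times k}$ and only $k\le 6$ from the maximal cp-rank bound; appending $\sqrt{\epsilon}\,\mathbf{1}$ therefore gives up to $7$ columns, not $6$. The phrase ``$10$-dimensional kernel'' silently assumes $k=5$, but nothing forces $A-\epsilon\mathbf{1}\mathbf{1}^{\mathsf T}\in\CRF$ when $A$ sits on $\partial\CRF$. Even granting $k=5$, the claim that a kernel perturbation plus the appended positive column pushes every zero of $B_0$ strictly positive while preserving the product exactly is asserted, not proved. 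The paper bypasses all of this by quoting \cite[Theorem~4.1]{Bomze2015}, which gives $\max\{cp^+(M):M\in\CP^\circ\}=6$ directly.

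\emph{Reverse inclusion.} You need: $A\in\CRF^\circ\cap\CP^\circ\Rightarrow cp^+(A)\le 5$. Your sketch---tangent-cone surjectivity at one $B_0$, then ``propagating around the compact fiber'' via ``semialgebraic continuity'' to extract a strictly positive factor---is not an argument. Membership $A\in\CRF^\circ$ says only that each nearby symmetric matrix has \emph{some} nonnegative $5\times5$ factorization; it gives no local openness of $\varphi$ at any fixed $B_0$, and you supply no mechanism for turning fiber-wide information into a single strictly positive factor of $A$ itself. This is exactly the hard half of \cite[Corollary~2.7]{Bomze2015}, which the paper cites rather than reproves.

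A final remark: the Horn and Hildebrand descriptions parametrize $\partial\CP\cap\DN^\circ$, not $\partial\CRF\cap\CP^\circ$; the latter lies in the interior of $\CP$ and has nothing to do with those loci, so your closing suggestion would not help here.
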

\begin{proof}
The interior of $\CP$ consists of invertible matrices, meaning that every matrix in the interior has cp-rank at least 5. Combining \cite[Theorem 4.1]{Bomze2015} together with the fact that maximal cp-rank in $\CP$ is equal to 6 tells us that 
\begin{equation*}
\max\{cp^+(M)\, | \, X\in \CP^\circ\}=\max\{cp(M)\,| \, X\in \CP^\circ\} =6.
\end{equation*}

By \cite[Corollary 2.7]{Bomze2015} and Theorem \ref{closedness}, all the points of $\overline \CRF \backslash (\CRF^\circ \cup \partial\CP)$ are matrices whose cp-rank is 5 and whose cp$^+$ rank is 6. According to Thm.~\ref{closedness}, every point in $\overline \CRF \backslash (\CRF^\circ \cup \CP)$ is an accumulation point and hence an element of $\partial\CRF \cap \CP^\circ$. 
\end{proof}

We are interested in the factorization of elements in $\partial \CRF \cap \CP^\circ$. Theorem \ref{thm:Interior} implies that the interior of the cone is contained in the closure of two disjoint open sets, whose boundary is described in terms of matrices whose factorizations are quite special. In particular, all of them must have a large number of entries that are zero.

While figuring out the exact pattern is complicated, there are still a few things that could be said. By manipulating the zeros as in \cite{Dickinson2010}, we can assume that every column of a $5\times 5$ factorization contains at least one zero. Furthermore, in a factorization with the smallest possible number of zeros, there can be no pair of columns such that the zero entries of one column are also zero entries of the other column, i.e., the zero pattern of one column cannot be contained in the zero pattern of another.

Since the dimension of the variety $\partial \CRF \cap \CP^\circ$ is 14, the number of zeros in the factorization is between 5 and 11. The possible patterns can be classified combinatorially and are many. But the main issue is that even if we have such potential patterns, there is little we know to find parametrizations with the given pattern. The difficulty is perhaps the lack of convexity of the relevant cones making it impossible to use duality techniques as for the boundary of $\CP$. Furthermore, even though the factorizations should in principle be unique (in the spirit of Theorem \ref{thm:uniqueness}), there could be other factorizations that are not nonnegative but very close. This is an issue even in the boundary $\partial \CP$, as will be discussed in Sec.~\ref{ssc:condition}, and it makes the generation of possible examples very difficult. And lastly, it is difficult to verify that possible polynomial equations yield a parametrization, because this will be further restricted by an unknown number of inequalities that cannot be predicted using the dimension of the set.

Nevertheless, we believe that we can produce exact examples of such matrices, as will be shown in Sec.~\ref{ssc:numinner}. The numerical experiments suggest that the number of zeros in the square factorization would be 10 or 11. We remark that having factorizaions with 11 zeros seems implausible and worth further consideration. On the one hand, since $\partial \CRF \cap \CP^\circ$ is a hypersurface, its dimension is 14 and the matrices with 11 zeros would provide a parametrization of an algebraic component of a $\partial \CRF \cap \CP^\circ$. On the other hand, this boundary remains elusive: Simply inserting random values into a given zero pattern will not produce a desired factor matrices, because of the unknown polynomial inequalities. We discuss this in detail in Sec.~\ref{ssc:numinner}.

\section{A numerical algorithm}
\label{sec:numalg}

A number of algorithms for the computation of the completely positive factorization of a matrix $A$ have been proposed. Since computing the cp-rank or even deciding whether a matrix allows such a factorization is a co-NP-complete problem \cite{MurtyKabadi87,Sponsel2012}, these algorithms must assume that the cp-rank $r$ is known and preset. If this choice turns out to be wrong, the algorithm will either not terminate or it will present a solution that does not fulfill all criteria, i.e., it is either not symmetric, not nonnegative, or it does not yield the desired matrix.

We can distinguish two general classes of algorithms. The first approach is to begin with any symmetric factorization $A = BB^{\mathsf T}$ and then to iteratively alter $B \in \mathbb R^{n \times r}$ such that it becomes nonnegative. This can for example be done by picking an initial orthogonal matrix $Q \in \mathcal O(r)$ and projecting it onto the polyhedral cone
\begin{equation*}
\mathcal P = \{ R \in \mathbb R^{r \times r} : BR \geq 0 \}.
\end{equation*}
After this, the result will in turn be projected back onto $\mathcal O(r)$, whereupon the procedure repeats until $BQ$ remains nonnegative and therefore constitutes a solution. This method has been proposed in \cite{Groetzner2020}, it is very fast and it returns an exact solution since $BQ\bigl(BQ\bigr)^{\mathsf T} = BB^{\mathsf T} = A$, at least up to numerical accuracy for the representation of $Q$. However, it has the severe drawback that the set of orthogonal matrices $Q$ with $BQ \geq 0$ needs to be sufficiently large, or else the algorithm often fails to converge. In the boundary $\CPP_n$, the factor matrices have many zeros and therefore this set is of high codimension in $\mathcal O(r)$. Thus, if $A$ is (very) close to the boundary $\partial \CPP_n$, the algorithm will fail and it cannot be used to distinguish these matrices from the ones that do not allow a completely positive factorization of rank $r$.

The alternative approach consists of algorithms that approximate the exact factorization while maintaining symmetry and nonnegativity, see for example \cite{Ding2005}. In our experiments, we used a version of these methods that to our knowledge has not been applied to the problem at hand, although it uses only standard tools of numerical approximation. The goal is to minimize the function
\begin{equation*}
f(B) = \bigl\| A - B B^{\mathsf T} \bigr\|^2.
\end{equation*}
In order to guarantee nonnegativity of $B$, we can write its entries as squares, resulting in $B = C \circ C$. Together with a factor that simplifies the gradient, we aim to minimize the smooth function
\begin{equation*}
g(C) = \frac18 \Bigl\| A - \bigl( C \circ C \bigr) \bigl( C \circ C \bigr)^{\mathsf T} \Bigr\|^2.
\end{equation*}

This is done using standard tools from numerical optimization. Since we deal with matrices, the MATLAB-toolbox {\tt manopt} allows for an easy implementation and good performance \cite{manopt}. We applied the provided trust region scheme, because it proves to be faster than equally applicable methods like gradient descent or nonlinear conjugate gradients.\footnote{The trust region method consists of solving a quadratic approximation of the cost function $g$ on a small {\em trusted} region around the current iterate. Its size is adapted throughout the procedure according to prior performance. In {\tt manopt}, the Hessian is approximated using finite differences of the gradient. We do not go into more detail here and simply set the next iterate as $C_{k+1} := {\rm TrustRegion}\bigl(C_k,g,\nabla g(C_k)\bigr)$. See \cite{Nocedal2006} for an accessible introduction into these methods.} The only other ingredient that we need is the Euclidean gradient of our function $g$, which can be readily given as
\begin{equation*}
\nabla g(C) = \Bigl( \bigl(C \circ C\bigr) \bigl(C \circ C \bigr)^{\mathsf T} \bigl(C \circ C \bigr) \Bigr) \circ C - \Bigl( A \, \bigl( C \circ C \bigr) \Bigr) \circ C.
\end{equation*}
See Algorithm~\ref{alg:method} for the implementation in pseudo-code.

\begin{algorithm}[t]
\label{alg:method}
\caption{Trust region scheme for the computation of the cp-factorization. }
\SetKwInput{Input}{Input~}
\SetKwInOut{Output}{Output~}
\Input{$A \in \DNN_n$, initial point $C_0 \in \mathbb R_\geq^{n\times r}$, tolerance $\varepsilon > 0$, maximal number of iterations ${\rm maxit}$}
$k = 0$\; 
\While{$\| \nabla g(C_k) \| > \varepsilon$ {\rm \textbf{and}} $k < {\rm maxit}$}
{
Perform trust region step $C_{k+1} := {\rm TrustRegion}\bigl(C_k,g,\nabla g(C_k)\bigr)$\;
$k \leftarrow k + 1$\;
}
$B = C_k \circ C_k$\;
\Output{Approximate factor matrix $B \in \mathbb R_\geq^{n\times r}$}
\end{algorithm}

\subsection{Enforcing zeros}

We know that the concept of zero patterns in the factorization plays an important role in the characterization of the boundary $\partial \CP$ or $\CRF \cap \CP^\circ$. For the purpose of numerical experiments, it can therefore be beneficial to enforce a specific zero pattern in the solution, for example in order to find a matrix in these sets. Hence, we note that if the initial point of the optimization, say $C_0$, has a given zero pattern, then so does the gradient $\nabla g(C_0)$. Any next iterate $C_1$ that is a result of a gradient related optimization step will therefore still have the given zero pattern, and so on. This means that enforcing a zero pattern in the algorithm can be done by simply starting out with this pattern. However, we also need to take into consideration that we effectively look for a solution on the intersection of a linear space with a hypersurface (e.g., a part of the boundary), which can lead to undesired effects.

\subsection{Condition of the reconstruction problem}
\label{ssc:condition}

The above approach essentially means that we are trying to find a global minimum of a polynomial of order 8, resulting in many local minima and possibly in an ill-conditioned problem. However, we can simply restart the method if it does not produce an actual solution (meaning $g(C) = 0$). This works as long as we know the cp-rank of our matrix. Otherwise, the algorithm will produce an approximation of our matrix and we can use several tries to find the best one. This seems to work well in practice.

The issue of conditioning is more problematic. Na\"ively, if $g(C) = \varepsilon$ for a small error $\varepsilon > 0$, one would expect the error $\| B - \bigl(C \circ C\bigr) \|$ to be of order $O(\varepsilon^{\frac18})$, where $B$ is an exact factorization of $A$. However, we can find examples that illustrate that the problem is more severe: 
We begin by constructing a matrix in the Horn part of the boundary (see~\eqref{eqn:HornParam} and Sec.~\ref{ssc:constbound}):

\begin{equation*}
B = \begin{pmatrix} 1 & 0 & 0 & 4 & 6 \\
2 & 1 & 0 & 0 & 5 \\
1 & 3 & 1 & 0 & 0 \\
0 & 2 & 4 & 1 & 0 \\
0 & 0 & 3 & 5 & 1
\end{pmatrix} \quad \Rightarrow \quad A = B B^{\mathsf T} = \begin{pmatrix} 53 & 32 & 1 & 4 & 26 \\
32 & 30 & 5 & 2 & 5 \\
1 & 5 & 11 & 10 & 3 \\
4 & 2 & 10 & 21 & 17 \\
26 & 5 & 3 & 17 & 35
\end{pmatrix}
\end{equation*}
Our algorithm is often able to reconstruct the matrix $B$ but in other cases (depending on the random starting point), a different factor matrix is produced: 

\begin{equation*}
\tilde B = \begin{pmatrix} 
4.9294\ldots & 0 & 0 & 1.5165\ldots & 5.1382\ldots \\
5.1323\ldots & 1.3996\ldots & 0 & 0 & 1.3041\ldots \\
0.2029\ldots & 2.8286\ldots & 1.7199\ldots & 0 & 0 \\
0 & 1.4290\ldots & 3.4642\ldots & 2.6377\ldots & 0 \\
0 & 0 & 1.7443\ldots & 4.1542\ldots & 3.8341\ldots
\end{pmatrix}
\end{equation*}
The resulting matrix $\tilde A = \tilde B \tilde B^{\mathsf T}$ is numerically almost indistinguishable from $A$:
\begin{equation*}
\| A - \tilde A \| \approx 2.143633117337326 \cdot 10^{-11}.
\end{equation*}
We can even find an orthogonal transformation $Q \in \mathcal O(5)$, for which, up to numerical accuracy, it holds $B Q = \tilde B$:
\begin{equation*}
Q = \begin{pmatrix} 
0.2262\ldots & -0.0611\ldots & 0.6472\ldots & -0.6666\ldots & 0.2860\ldots \\
-0.0333\ldots & 0.9922\ldots & 0.1194\ldots & 0.0087\ldots & -0.0116\ldots \\
0.0764\ldots & -0.0931\ldots & 0.7246\ldots & 0.6318\ldots & -0.2474\ldots \\
-0.2352\ldots & -0.0044\ldots & 0.0346\ldots & 0.3439\ldots & 0.9084\ldots \\
0.9416\ldots & 0.0562\ldots & -0.2014\ldots & 0.1951\ldots & 0.1779\ldots 
\end{pmatrix}.
\end{equation*}
Similar examples can be constructed for the Hildebrand part of the boundary.

\begin{rem}
Note that the factor matrix $\frac12 B + \frac12 \tilde B$ (or any other linear combination) does not yield a matrix close to $A$, even though they are in the same linear subspace described by~\eqref{eqn:HornParam}. This is because the mixed terms $B \tilde B^\mathsf{T}$ and $\tilde B B^\mathsf{T}$ yield very unpredictable results.

Furthermore, we have tested transformations $B \tilde Q$ for some orthogonal matrices on the geodesic in $\mathcal O(5)$ between the identity and $Q$. As one might expect, all of these transformations result in some negative entries, suggesting that $Q$ is the one transformation matrix that makes $BQ$ nonnegative, while at the same time being orthogonal up to machine precision.
\end{rem}

Since the factorization of a matrix in the boundary is unique, this is not actually a problem of ill-conditioning, because we cannot find arbitrarily close approximations of $A$. The approximation problem is only {\em numerically} ill-posed, which nevertheless makes it much more difficult to solve.

Ultimately, the problem of numerical ill-posedness is inherent in the problem structure and not in the algorithm itself. Any algorithm that is subject to numerical noise will suffer from it. In fact, we can learn many interesting things from this: Even for matrices with a unique cp-factorization (i.e., those in $\partial \CP$) there exist orthogonal transformations that will preserve the factorization up to numerical accuracy. 

\subsection{Finding unknown factorizations}
With the above caveat in mind, we can nevertheless use our algorithm to find factorizations of difficult matrices. The article \cite{Bomze2014} generates a number of matrices of different sizes that are known to be cp-decomposable with known (high) cp-rank, but their factorization is unknown. Our algorithm was able to find the (approximate) cp-factorization $\tilde B$ of the $7 \times 7$ matrix

\begin{equation*}
A = \begin{pmatrix}
163 & 108 & 27 & 4 & 4 & 27 & 108 \\
108 & 163 & 108 & 27 & 4 & 4 & 27 \\
27 & 108 & 163 & 108 & 27 & 4 & 4 \\
4 & 27 & 108 & 163 & 108 & 27 & 4 \\
4 & 4 & 27 & 108 & 163 & 108 & 27 \\
27 & 4 & 4 & 27 & 108 & 163 & 108 \\
108 & 27 & 4 & 4 & 27 & 108 & 163 \\
\end{pmatrix}
\end{equation*}
that has known cp-rank 14, up to an accuracy $\| A - \tilde B \tilde B^{\mathsf T} \| \approx 1.179332674168671 \cdot 10^{-8}$. The peculiar zero pattern together with the fact that many of the entries seemed to be repeated led us to deduce that the exact factorization is
\begin{equation*}
\setcounter{MaxMatrixCols}{14}
B = \begin{pmatrix}
\sqrt{27} & \frac{51}{\sqrt{27}} & \sqrt{27} & 0 & 0 & 0 & 0 & \sqrt{\frac23} & 0 & 0 & \sqrt{6} & \sqrt{6} & 0 & 0 \\
0 & \sqrt{27} & \frac{51}{\sqrt{27}} & \sqrt{27} & 0 & 0 & 0 & 0 & \sqrt{\frac23} & 0 & 0 & \sqrt{6} & \sqrt{6} & 0 \\
0 & 0 & \sqrt{27} & \frac{51}{\sqrt{27}} & \sqrt{27} & 0 & 0 & 0 & 0 & \sqrt{\frac23} & 0 & 0 & \sqrt{6} & \sqrt{6} \\
0 & 0 & 0 & \sqrt{27} & \frac{51}{\sqrt{27}} & \sqrt{27} & 0 & \sqrt{6} & 0 & 0 & \sqrt{\frac23} & 0 & 0 & \sqrt{6} \\
0 & 0 & 0 & 0 & \sqrt{27} & \frac{51}{\sqrt{27}} & \sqrt{27} & \sqrt{6} & \sqrt{6} & 0 & 0 & \sqrt{\frac23} & 0 & 0 \\
\sqrt{27} & 0 & 0 & 0 & 0 & \sqrt{27} & \frac{51}{\sqrt{27}} & 0 & \sqrt{6} & \sqrt{6} & 0 & 0 & \sqrt{\frac23} & 0 \\
\frac{51}{\sqrt{27}} & \sqrt{27} & 0 & 0 & 0 & 0 & \sqrt{27} & 0 & 0 & \sqrt{6} & \sqrt{6} & 0 & 0 & \sqrt{\frac23}
\end{pmatrix}.
\end{equation*}
We remark that our algorithm did not produce similar results for the other matrices in \cite{Bomze2014}, even over many tries, as it seems to run into local minima.

\section{Constructing examples}
\label{sec:consexp}

Before we use our algorithm to compute the cp-factorization of matrices in the different parts of $\CP$ that we have discussed above, we present some ideas of how to produce more or less generic matrices in these parts. First of all, let it be stated that picking a matrix $M \in \DN$, say of fixed Frobenius norm $\|M\| = 1$, uniformly at random is not entirely trivial. If we just pick 15 nonnegative entries of $M$ on the upper triangle and normalize, we will most likely not have a positive semidefinite matrix. This could be remedied with the so-called rejection algorithm, where such a randomly chosen matrix $M \in \NN_5$ is rejected precisely when it has some negative eigenvalues. In any case, a generic matrix in $\DN$ will either have cp-rank 5 or 6, or it will not allow for a completely positive factorization. In the following, we will discuss several other strategies of how to obtain interesting matrices.

\subsection{Matrices in $\partial \CP$}\label{ssc:constbound}

The parts of the boundary $\partial \CP$ that are also in the boundary of $\DN$ are very easy to reproduce, either by picking a matrix of rank 4 or less (we call this part of the boundary $V_{rk4}$), or by keeping one or many entries equal to zero (for the boundary part $V_{zero}$). We have identified the remaining parts as $V_{Horn}$ and $V_{Hi}$. Since the algebraic and semialgebraic equations that describe these sets are rather complicated, we have to use an indirect approach and choose the factor matrices.

In order to obtain points in $V_{Horn} \subset \partial \CP$, one can simply choose 15 variables $x_1,\ldots,x_5$, $y_1,\ldots,y_5$, $z_1,\ldots,z_5$ (e.g.~uniformly at random) and produce a factor matrix 
\begin{equation}\label{eq:Hornmatrix}
\setlength\arraycolsep{2.5pt}
B_{Horn} = 
\left(\begin{matrix*}
x_1 & 0 & 0 & 0 & 0 \\
0 & x_2 & 0 & 0 & 0 \\
0 & 0 & x_3 & 0 & 0 \\
0 & 0 & 0 & x_4 & 0 \\
0 & 0 & 0 & 0 & x_5
\end{matrix*}\right)
\left(\begin{matrix*}
1 & 0 & 0 & y_4 & y_5 + 1 \\
y_1 + 1 & 1 & 0 & 0 & y_5 \\
y_1 & y_2 + 1 & 1 & 0 & 0 \\
0 & y_2 & y_3 + 1 & 1 & 0 \\
0 & 0 & y_3 & y_4 + 1 & 1
\end{matrix*}\right)
\left(\begin{matrix*}
z_1 & 0 & 0 & 0 & 0 \\
0 & z_2 & 0 & 0 & 0 \\
0 & 0 & z_3 & 0 & 0 \\
0 & 0 & 0 & z_4 & 0 \\
0 & 0 & 0 & 0 & z_5
\end{matrix*}\right).
\end{equation}
If all variables are chosen to be nonnegative, this will yield an element in the boundary $A_{Horn} = B_{Horn} B_{Horn}^{\mathsf T} \in V_{Horn} \cap \partial \CP$. 

In the case of $V_{Hi}$, this is done similarly by choosing $x_1,\ldots,x_5$, $\theta_1,\ldots,\theta_5$, $z_1,\ldots,z_5$ and replacing the middle matrix in~\eqref{eq:Hornmatrix} by $S(\Theta)$ in~\eqref{eqn:Spaces}, keeping in mind that $\sum_i \theta_i < \pi$ must hold. The resulting matrix $A_{Hi} = B_{Hi} B_{Hi}^{\mathsf T}$ will however be only approximately in the boundary $V_{Hi}$, since the trigonometric functions can only be approximated in general. In order to obtain matrices in $V_{Hi}$ with rational entries, we can round all entries of $B_{Hi}$ to a given number of digits except for one, which will then be determined by the fact that $y_{11}y_{22}y_{33}y_{44}y_{55} - y_{13}y_{24}y_{35}y_{41}y_{52} = 0$ must hold.

\subsection{Tangent spaces of the boundary and their orthogonal lines}
\label{ssc:consttang}

Examples of matrices in $\DN$ that do not allow for a completely positive factorization of any rank, i.e., of matrices in $\DN \setminus \CP$, are rare. Some are given in \cite{Berman2003}. In theory, all extreme rays of $\DN$ are known and those that are {\em not} extreme rays of $\CP$ should in principle produce more examples, but obtaining rational matrices this way could be difficult due to the involvement of trigonometric functions.

With the knowledge of the boundary $\partial \CP \cap \DN^\circ$, we are able to present a more systematic procedure to produce such examples for the case of $5 \times 5$ matrices. For this, we calculate a normal direction to this part of the boundary. Then, any point in this direction will be outside $\CP$. If we take a small enough step, we will often find cases that are still in $\DN$. We describe this procedure in more detail for the Hildebrand locus $V_{Hi}$:

Let $B_{Hi}$ be the factor matrix as in Sec.~\ref{ssc:constbound}. Theorem~\ref{thm:hildpoly} states that then $y_{52} = \frac{y_{11}y_{22}y_{33}y_{44}y_{55}}{y_{13}y_{24}y_{35}y_{41}}$ and therefore, we have a smooth parametrization $\varphi : \mathbb R^{14} \rightarrow V_{Hi} \subset \mathbb R^{15}$. Taking the orthogonal complement with respect to the Bombieri-norm in $\mathbb R^{15}$ of the Jacobian $\nabla \varphi(y)$ reveals the normal direction in $\mathbb R^{15}$.

The same procedure works for the Horn locus $V_{Horn}$, either by using the much more complicated polynomial equation $\det(H \circ X) = 0$, or by taking the derivative of the 15-dimensional parametrization~\eqref{eq:Hornmatrix} and using the fact that the Jacobian will be rank-deficient.

\subsection{Matrices in $\CP \setminus \CRF$}
\label{ssc:cp6}

It is also interesting to obtain matrices of cp-rank 6, i.e., matrices in $\CP \setminus \CRF$. In \cite{Shaked-Monderer2013}, a Kronecker-structured matrix of cp-rank 6 is given. However, it has rank 4 and it is therefore an element of $\partial \DN$. In order to obtain matrices in the interior, we can use some small perturbations of these matrices by matrices in the interior of $\CP$. Alternatively, a brute force sampling of $\DNN_5$, as performed in the next section, sometimes (albeit rarely) yields a matrix of cp-rank 6.

\section{Numerical experiments}

In the following, we perform a number of experiments that serve to highlight some aspects of the cp-cone for the special case $n = 5$. We will use our approximation algorithm and we will see that even for this small case, the cp-factorization problem is very complicated.

\subsection{Generic matrices in $\DNN_5$}

\begin{table}[t]
\centering
\begin{tabu}{ c | c | c | c}
\toprule
$A \in \DNN_5$ & $A \in \DNN_5 \setminus \CP$ & $A \in \CRF$ & $A \in \mathcal{CPR}_5(6)$ \\ 
\midrule
50000 & 8 & 49991 & 1 \\
\bottomrule
\end{tabu}
\vspace*{.2cm} 
\caption{Occurrences of matrices out of 50000 random draws in $\DNN_5$ with the rejection algorithm.}
\label{tab:generic}
\end{table} 

As discussed above, a generic matrix in $\DNN_5$ can be picked using the rejection algorithm. In Table~\ref{tab:generic}, we report on the occurrences of matrices in different full-dimensional parts of $\DNN_5$ out of 50000 of such picks. For each pick, we generated the 15 unique entries of the symmetric matrix $A$ uniformly at random in $(0,1)$. Then we calculated the eigenvalues of this matrix and rejected the matrix if any one of the eigenvalues was negative. Note that we did not normalize the matrix, since linear scaling does not have an effect on this experiment. The resulting matrix $A$ will be an element of $\DNN_5$. We then ran our algorithm 10 times with cp-rank 5 and we consider $A \in \CRF$ if the resulting factorization $B B^{\mathsf T}$ of any of these runs has an error
\begin{equation*}
\| A - B B^{\mathsf T} \| < 10^{-8}.
\end{equation*}
If out of these 10 runs none was successful, we increased the cp-rank to 6 and repeated the experiment. If again no run produced an error smaller than $10^{-8}$, we consider the matrix $A \in \DNN_5 \setminus \CP$.

We can see that the vast majority of generic matrices in $\DNN_5$ has cp-rank 5. A small number of matrices does not allow for a cp-factorization and only one matrix seems to have cp-rank 6.

\subsection{Experiments on the boundary}
In contrast to \cite{Groetzner2020}, our numerical algorithm allows us to factorize matrices in $\partial \CP$. The factorizations of these matrices are unique and should in principle be reconstructable. However, as we have discussed in Sec.~\ref{ssc:condition}, this is numerically not stable.

Table~\ref{tab:boundary} shows an experiment for the reconstructability of different parts of the cone. Since the boundary is of lower dimension, a random pick in $\DNN_5$ will never produce a matrix on the boundary. Therefore, we resort to the procedures presented in Sec.~\ref{sec:consexp} in order to obtain more or less random elements of $\partial \CP$.

With these methods, we randomly generated 100 matrices in the interior $\CP^\circ$, the Horn locus $V_{Horn}$, the Hildebrand locus $V_{Hi}$, the rank-deficient locus $V_{rk4}$, and those containing (at least) a zero in $V_{zero}$. For each of the matrices we performed our algorithm 10 times and considered it successful if the reconstructed matrix differs by less than $10^{-6}$ from the original matrix, in terms of Frobenius error. We can see that our algorithm successfully finds such a factorization in all cases.

Furthermore, for matrices in the interior, in the Horn locus, and in the Hildebrand locus, we can also compare the reconstructed factor matrices. This is not possible for rank-deficient matrices, because we applied our algorithm with initial rank 5 and the actual factor matrix has size $5 \times 4$. Similarly, we do not know the original factor matrix for matrices in $V_{zero}$, and therefore we cannot compare our results to it. We consider the factorization to be successful if the factor matrices differ by less than an error of $10^{-3}$ from the original factors. It is unsurprising that we never find the original factorization of a matrix in the interior of the cone since there are infinitely many factorizations. For the other two loci, we encounter the problem of numerical ill-posedness as discussed in Sec.~\ref{ssc:condition} in about three quarters of the cases, suggesting that finding a unique factorization is numerically very difficult.

\begin{table}[t]
\centering
\begin{tabu}{ >{\centering\arraybackslash}p{1.3cm} | >{\centering\arraybackslash}p{1.3cm} | >{\centering\arraybackslash}p{1.3cm} | >{\centering\arraybackslash}p{1.3cm} | >{\centering\arraybackslash}p{1.3cm} | >{\centering\arraybackslash}p{1.3cm} | >{\centering\arraybackslash}p{2.5cm} | >{\centering\arraybackslash}p{2.5cm}  }
\toprule
\multicolumn{2}{c|}{$A \in \CP^\circ$} & \multicolumn{2}{c|}{$A \in V_{Horn}$} & \multicolumn{2}{c|}{$A \in V_{Hi}$} & $A \in V_{rk4}$ & $A \in V_{zero}$ \\
a) & b) & a) & b) & a) & b) & & \\
\midrule
100 & 0 & 100 & 28 & 100 & 21 & 100 & 100 \\
\bottomrule
\end{tabu}
\vspace*{.2cm} 
\caption{Success rate of the algorithm out of 100 tries for the computation of any factorization (a) and for the original factorization (b) for different parts of $\CP$.}
\label{tab:boundary}
\end{table} 

\subsection{Approximation problems}

Using the method discussed in Sec.~\ref{ssc:consttang}, we can produce examples in $\DNN \setminus \CP$. Since the cone is also convex, we know the best approximations in $\CP$ for these matrices and we can use our algorithm to try to find them.

Table~\ref{tab:approx} shows the success rate for this approximation problem. For 100 random matrices in $V_{Hi}$, we computed the normal directions and perturbed the matrices in this direction with distances $10^{-5}$ up to $10^{-1}$. For each distance, we again performed our algorithm 10 times and reported the approximation successful if the reconstructed matrix has an error of $10^{-6}$. As before, we considered the reconstruction of the factor matrix successful if the factor matrices exhibited an error of less than $10^{-3}$. If at the beginning we successfully reconstructed the {\em perturbed} matrix, we took this as a sign that we moved into the interior of the cone and thus changed directions for the following steps.

One can see that the algorithm is able to find the closest matrix in $\CP$ in many cases. Peculiarly, it also finds the correct factorization more often than in the reconstruction problem. This is consistent with all our experiments and we suspect that it happens because the alternative solutions discussed in Sec.~\ref{ssc:condition} are not local minima of the approximation problem. However, the approximation problem seems to be more difficult and it runs into many local minima. In many cases, we did not find the original matrix over the 10 attempts. This problem seems to get worse with a larger distance from the boundary.

\begin{table}[t]
\centering
\begin{tabu}{ >{\centering\arraybackslash}p{1.3cm} | >{\centering\arraybackslash}p{1.3cm} | >{\centering\arraybackslash}p{1.3cm} | >{\centering\arraybackslash}p{1.3cm} | >{\centering\arraybackslash}p{1.3cm} | >{\centering\arraybackslash}p{1.3cm} | >{\centering\arraybackslash}p{1.3cm} | >{\centering\arraybackslash}p{1.3cm} | >{\centering\arraybackslash}p{1.3cm} | >{\centering\arraybackslash}p{1.3cm} }
\toprule
\multicolumn{2}{c|}{$10^{-5}$} & \multicolumn{2}{c|}{$10^{-4}$} & \multicolumn{2}{c|}{$10^{-3}$} & \multicolumn{2}{c|}{$10^{-2}$} & \multicolumn{2}{c}{$10^{-1}$} \\ 
a) & b) & a) & b) & a) & b) & a) & b) & a) & b) \\
\midrule
87 & 66 & 87 & 71 & 74 & 66 & 63 & 57 & 66 & 62 \\
\bottomrule
\end{tabu}
\vspace*{.2cm} 
\caption{Success rate of the approximation out of 100 tries for the computation of any factorization (a) and for the original factorization (b) for different distances from the boundary.}
\label{tab:approx}
\end{table} 

\subsection{Numerical observations on the boundary $\partial \CRF \cap \CP^\circ$}
\label{ssc:numinner}
In the previous sections, we have given an exhaustive description of the boundary $\partial \CP$ of the convex cone $\CP$. Since the maximal cp-rank of a $5 \times 5$ matrix is 6, the only remaining interesting part of the cone is the intersection $\partial \CRF \cap \CP^\circ$, which we briefly discussed in Sec.~\ref{sec:intcp}. A detailed description of this ``interior boundary'' is out of reach even for the case $n = 5$, because it is not derived from the extreme rays of the dual problem, nor is the set $\CRF$ convex. With all current tools at our disposal, it seems like a general description of this set comes down to the combinatorial evaluation of all possible nonreducible zero patterns  as for example done in \cite{Hildebrand}.

Nevertheless, it is possible to make some numerical observations. Given a matrix $A$ of cp-rank 6 (see Sec.~\ref{ssc:cp6}) and running our algorithm with a rank 5, it returns an approximate factorization of cp-rank 5. If subsequent runs with different random initial inputs return the same factorization, one can reasonably conclude that this approximate factorization is in fact unique and thus an element of $\partial \CRF \cap \CP^\circ$. In this fashion, we derived the following interesting example:

We begin with the matrix
\begin{equation*}
A_6 = \begin{pmatrix} 
0.4722\ldots & 0.1493\ldots & 0.0225\ldots & 0.1083\ldots & 0.0296\ldots\\
0.1493\ldots & 0.3519\ldots & 0.1442\ldots & 0.0111\ldots & 0.1316\ldots \\
0.0225\ldots & 0.1442\ldots & 0.4121\ldots & 0.2120\ldots & 0.0157\ldots \\
0.1083\ldots & 0.0111\ldots & 0.2120\ldots & 0.2113\ldots & 0.0719\ldots \\
0.0296\ldots & 0.1316\ldots & 0.0157\ldots & 0.0719\ldots & 0.4366\ldots
\end{pmatrix}.
\end{equation*}
This matrix has cp-rank 6. Several runs of our algorithm with rank 5 reveal the best cp-rank 5 approximation $A_5$, which has the factorization
\begin{equation*}
B_5 = \begin{pmatrix} 
0 & 0.6216\ldots & 0.2872\ldots & 0.0578\ldots & 0 \\
0 & 0 & 0.4455\ldots & 0.3692\ldots & 0.1309\ldots \\
0.5097\ldots & 0 & 0 & 0.3904\ldots & 0 \\
0.4158\ldots & 0.1742\ldots & 0 & 0 & 0.0902\ldots \\
0.0310\ldots & 0 & 0.1036\ldots & 0 & 0.6519\ldots
\end{pmatrix}.
\end{equation*}
The matrix $A_5$ is an element of $\partial \CRF \cap \CP^\circ$ and its factor matrix $B_5$ has 11 zeros. This was consistent throughout all our experiments with $\partial \CRF \cap \CP^\circ$. But since this is a hypersurface of dimension 14, there can be no additional polynomial equation that defines it, meaning that the other entries of $B_5$ can be altered freely and we will always get a matrix in $\partial \CRF \cap \CP^\circ$ (up to semialgebraic equations, i.e., inside some possibly small intervals).

In order to verify this observation, we can round the entries of $B_5$ to 2 decimals:
\begin{equation*}
\tilde B_5 = \begin{pmatrix} 
0 & 0.62 & 0.29 & 0.06 & 0 \\
0 & 0 & 0.45 & 0.37 & 0.13 \\
0.51 & 0 & 0 & 0.39 & 0 \\
0.42 & 0.17 & 0 & 0 & 0.09 \\
0.03 & 0 & 0.10 & 0 & 0.65
\end{pmatrix}.
\end{equation*}
Applying our algorithm to $\tilde A_5 = \tilde B_5 \tilde B_5^{\mathsf T}$ recovers the factorization $\tilde B_5$. This strongly suggests that also $A_5 \in \partial \CRF \cap \CP^\circ$ and that this set is in fact fully described by the zero patterns of the factor matrices.
\begin{rem}
Since the number of nonnegative factorizations of elements in $\partial \CP \cap \DN^\circ$ and $\partial\CRF\cap \DN^\circ$ is finite, all these factorizations are locally rigid in the sense of \cite{Krone2021}. One may wonder if they are also infinitesimally rigid. One of the main results in Sec.~6 of the article is that any infinitesimally rigid factorization must have at least 11 entries equal to zero. 
This implies that matrices in $\partial \CP \cap \DN^\circ$ do not admit infinitesimally rigid nonnegative factorizations. The situation in $\CRF$ is different: Some components of the boundary may actually consist of matrices that have a factorization with 11 zeros. As explained above, we have some candidates for such components. However, the examples we found fail to be infinitesimally rigid.

\end{rem}

\section{Open questions and outlook}

Before we discuss the conclusions that we derive from our results, we formulate some of the open problems that we encountered on our way.

The first problem relates to the locus $\partial \CRF \cap \CP^\circ$, i.e., those matrices that have cp-rank 5 but cp$^+$-rank 6. Because our evidence is the result of a numerical approximation algorithm that is also known to be fallible (as we will state next), we refrain from calling this a conjecture:
\begin{prob}
What are the polynomial inequalities that describe $\partial \CRF \cap \CP^\circ$? Are they linear in the entries of the factor matrices, as we suspect according to Sec.~\ref{ssc:numinner}? 
\end{prob}
Corollary 5.8 of \cite{Bomze2015} implies that the number of nonnegative square factorizations of any matrix in $\CRF \cap \CP^\circ$ is finite. However, since neither $\CRF$ nor $\mathcal{CPR}_5(6)$ are convex, answering this question requires a substantially different technique.

The next open problem concerns the numerical ill-posedness of the reconstruction problem. We know by our experiments that the factorization of matrices near the boundary is very chaotic: small changes in the matrix $A$ can result in very large changes in the entries of the factor matrix.
\begin{prob}
For any $\varepsilon > 0$, do there always exist matrices $A,\tilde A \in \partial \CP$ with $\| A - \tilde A \| < \varepsilon$, but, say $\| B B^{\mathsf T} - \tilde B \tilde B^{\mathsf T} \| > 10^{-3}$ for the factor matrices? In other words, can the numerical ill-posedness get arbitrarily bad or are we safe if we allow for a minimal numerical accuracy?
\end{prob}

Notice that if $\mathcal M := \{ X \in \mathbb R^{5 \times 5} : A = X X^{\mathsf T} \}$ denotes the set of all $5\times 5$ possibly negative factor matrices of $A$, the orthogonal group acts on $\mathcal M$ by right multiplication. Any matrix in $
\partial\CP\cap\DN^\circ$ has exactly 5 nonnegative factorizations that correspond to one orbit of the restricted action by the group of $5\times 5$ permutation matrices. The question above pertains to the notion of how close these orbits get to other parts of the boundary, i.e., can the orbits of some matrices of fixed size get arbitrarily close to the boundary at a point far from an actual factorization?

Finally, we address a problem that has been open for some time (see \cite[Section 4.1]{BermanDur2015}) concerning rational factorizations. There are at least two notions for rational factorizations, which are somewhat equivalent, but subtle. Say that a matrix $A\in \CPP_n$ with rational entries admits a rational factorization if there is an integer number $k$, rational numbers $q_1,\dots, q_k$ and rational column vectors $v_1, \dots, v_k$ such that $A = \sum_{j=1}^k q_j \, v_jv_j^{\mathsf T}$. Notice that when writing this expression as a factorization the entries may not be rational, as one has to use the square root of $q_j$ to distribute it among the two vectors. However this turns out be equivalent (see \cite[Section 4]{BermanShaked2018}) to the existence of a rational nonnegative $n \times m$ matrix $C$, such that $A=CC^{\mathsf T}$. We remark that the minimal possible values for $k$ and $m$, in case they exist, may be different and larger than the cp-rank. 
\begin{prob}
Let $A \in \partial \CPP_5\cap \DN^\circ$ have rational entries. Are the square factorizations of $A$ rational in either sense? 
\end{prob}

Given the parametrization of the boundary and the uniquenes of $5\times 5$ factorizations and the parametrization of the algebraic components of the boundary we could hope to answer this question entirely in the $5\times 5$ case. However, this simplification of the problem leads to finding rational points on a variety, which is a priory a difficult question. 

We now conclude our paper with a discussion of the implications and the future outlook. The algebraic description of the boundary $\partial \CP$ gives us an explanation for the complicatedness of the boundary: the high degree makes it very hard to access. Nonetheless, the algebra allows to systematically construct exact matrices in the boundary which seems to be a useful fact. It furthermore suggests that a description with the same level of precision for $n\ge 6$ is likely hopeless. 

It is shown in \cite{AfoninHildebrandDickinson2020} that the nontrival extreme rays of the cone of $6\times 6$ copositive matrices come in $42$ types. In other words, instead of having to deal with two loci, an analogous analysis of that cone would involve $42$ varieties, most of which are expected to yield varieties of much higher degree than the Hildebrand locus. In short, as dimension grows, the complexity of the boundary increases in two directions simultaneously: the number of algebraic components will diverge and each of the resulting varieties will become harder. 

Furthermore, we have so far not found a description of the part $\partial \CRF \cap \CP^\circ$ and we suspect that a full derivation would come down to figuring out all nonreducible zero patterns, which would be feasible in principle. But if the number of zeros is smaller than 11, one also needs to find the associated algebraic equations. This is a difficult algebraic problem. To make things even harder, the lack of convexity of the cones involved makes the description even harder. 

\vspace{0.5cm}

\noindent{\textbf{Acknowledgements:}} We would like to thank Bernd Sturmfels for suggesting this project and for many interesting conversations. We also thank Sascha Timme and Paul Breiding for help with the computation of the degree of $V_{Hi}$ using HomotopyContinuation. This project started while both authors were employed at the Max Planck Institute for Mathematics in the Sciences. M.P.~was partially funded by the Deutsche Forschungsgemeinschaft (DFG, German Research Foundation) – Projektnummer 448293816. 

\bibliographystyle{plain}
\bibliography{BIBLIO}

\end{document}